\newcommand{\defi}[1]{\emph{#1}} 
\DeclareMathOperator{\Res}{Res}
\DeclareMathOperator{\spin}{spin}
\DeclareMathOperator{\mytr}{tr}
\DeclareMathOperator{\M}{M}
\DeclareMathOperator{\GL}{GL}
\DeclareMathOperator{\GSp}{GSp}
\DeclareMathOperator{\Gal}{Gal}
\DeclareMathOperator{\Jac}{Jac}
\DeclareMathOperator{\Frob}{Frob}
\DeclareMathOperator{\new}{new}
\DeclareMathOperator{\Nm}{Nm}
\DeclareMathOperator{\End}{End}
\DeclareMathOperator{\Aut}{Aut}
\DeclareMathOperator{\Disc}{Disc}
\def \N{\mathfrak{N}}
\newcommand{\frakN}{\N}
\newcommand{\frakp}{\mathfrak{p}}
\def \PP{\mathbb{P}}
\def \QQ{\mathbb{Q}}
\def \Q{\QQ}
\def \QA#1#2#3{\displaystyle{\left(\frac{#1,#2}{#3}\right)}}
\def \ZZ{\mathbb{Z}}
\def \FF{\mathbb{F}}
\def \RR{\mathbb{R}}
\def\calO{\mathcal{O}}
\def\CC{\mathbb{C}}
\def\Cprime{C}
\def\Czero{C_0}
\newcommand{\lmfdbfield}[1]{\href{https://www.lmfdb.org/NumberField/#1}{\textsf{#1}}}
\newcommand{\lmfdbbmfshort}[3]{\href{https://www.lmfdb.org/ModularForm/GL2/ImaginaryQuadratic/#1/#2/#3}{\textsf{#2-#3}}}
\newcommand{\lmfdbbmf}[3]{\href{https://www.lmfdb.org/ModularForm/GL2/ImaginaryQuadratic/#1/#2/#3}{\textsf{#1-#2-#3}}}
\newcommand{\lmfdbbmfspace}[2]{\href{https://www.lmfdb.org/ModularForm/GL2/ImaginaryQuadratic/#1/#2}{\textsf{#1-#2}}}
\newcommand{\lmfdbcmf}[4]{\href{https://beta.lmfdb.org/ModularForm/GL2/Q/holomorphic/#1/#2/#3/#4/}{\textsf{#1.#2.#3.#4}}}
\newcommand{\lmfdbec}[3]{\href{https://www.lmfdb.org/EllipticCurve/Q/#1/#2/#3}{\textsf{#1.#2#3}}}
\newcommand{\lmfdbecisog}[2]{\href{https://www.lmfdb.org/EllipticCurve/Q/#1/#2}{\textsf{#1.#2}}}
\newcommand{\lmfdbecnfisog}[3]{\href{https://www.lmfdb.org/EllipticCurve/#1/#2/#3}{\textsf{#1.#2-#3}}}
\newcommand{\lmfdbDC}[2]{\href{https://www.lmfdb.org/Character/Dirichlet/#1/#2}{$\chi_{#1}(#2,\cdot)$}}
\def \Pari{{\sc PARI/GP}}
\def \Magma{{\sc Magma}}
\newcommand{\QQbar}{\QQ^{\textup{al}}}
\newcommand{\Qbar}{\QQbar}
\newcommand{\Ggroup}{H}
\newcommand{\Abar}{A^{\textup{al}}}
\theoremstyle{plain}
\newtheorem{theorem}[equation]{Theorem}
\newtheorem{proposition}[equation]{Proposition}
\newtheorem{conjecture}[equation]{Conjecture}
\newtheorem*{theorem1}{Theorem A}
\theoremstyle{remark}
\newtheorem{remark}[equation]{Remark}
\numberwithin{equation}{section}
\newenvironment{enumalph}
{\begin{enumerate}}
{\end{enumerate}}
\begin{document}
\title[Bianchi newforms and QM]{On rational Bianchi newforms and abelian surfaces with
  quaternionic multiplication}

\author[Cremona]{John Cremona}
\address{Warwick Mathematics Institute, Zeeman Building, University of Warwick, Coventry, CV4 7AL, UK}
\email{j.e.cremona@warwick.ac.uk}

\author[Demb\'el\'e]{Lassina Demb\'el\'e}
\address{Department of Mathematics, Dartmouth College, 6188 Kemeny Hall, Hanover, NH 03755, USA}
\email{lassina.dembele@gmail.com}

\author[Pacetti]{Ariel Pacetti} \address{FAMAF-CIEM, Universidad Nacional de
  C\'ordoba. C.P:5000, C\'ordoba, Argentina.}
\email{apacetti@famaf.unc.edu.ar}

\author[Schembri]{Ciaran Schembri}
\address{Department of Mathematics, Dartmouth College, 6188 Kemeny Hall, Hanover, NH 03755, USA}
\email{ciaran.schembri@dartmouth.edu}
\thanks{}

\author[Voight]{John Voight}
\address{Department of Mathematics, Dartmouth College, 6188 Kemeny Hall, Hanover, NH 03755, USA}
\email{jvoight@gmail.com}
\urladdr{\url{http://www.math.dartmouth.edu/~jvoight/}}

\keywords{}
\subjclass[2010]{Primary: 11G05, Secondary: 11G40}

\begin{abstract}
We study the rational Bianchi newforms (weight $2$, trivial character,
with rational Hecke eigenvalues) in the LMFDB that are not associated
to elliptic curves, but instead to abelian surfaces with quaternionic
multiplication.  Two of these examples exhibit a rather special kind
of behaviour: we show they arise from twisted base change of a
classical newform with nebentypus character of order~$4$ and eight
inner twists.
\end{abstract}

\maketitle

\section{Introduction}\label{sec:intro}

Let $f$ be a classical newform with weight $2$ and level $N$.  When
$f$ has rational coefficients, the Eichler--Shimura construction
attaches to $f$ an elliptic curve $E_f$ defined over~$\QQ$, a quotient
of the Jacobian of the modular curve $X_0(N)$, such that
$L(f,s)=L(E_f,s)$.  More generally, if $f$ has coefficients in a
number field $M_f$ of degree~$d$, this construction associates a
simple abelian variety~$A_f$ of dimension~$d$ defined over~$\QQ$, now
a quotient of the Jacobian of the modular curve $X_1(N)$, such that
$\End(A_f) \otimes \QQ \simeq M_f$ and $L(A_f,s)=\prod_\sigma
L(\sigma(f),s)$, the product over all embeddings $\sigma\colon M_f
\hookrightarrow \CC$.  (It may happen that $A_f$ acquires additional
endomorphisms over a number field, and in particular $A_f$ may not be
geometrically simple.)

It is natural to seek such a relationship for
modular forms over other number fields.  
In the case of totally real number fields, in many instances 
(for example, over fields of odd degree) a
construction analogous to the Eichler--Shimura construction is
obtained by replacing the modular curve by a
suitable Shimura curve.  However, a general such construction 
over number fields is not known.  
So with the simplest case in mind, we are led to ask: given a Bianchi 
newform $F$ of weight $2$ over an imaginary quadratic field $K$,
is there an abelian variety $A_F$ defined over $K$ whose
$L$-series matches that of $F$ (as above)?  

A precise answer to this question, consistent with the predictions of
the Langlands philosophy, is provided affirmatively by a conjecture of
Taylor \cite[Conjecture 3]{MR1403943}---but with a wrinkle.  Let $M_F$
be the number field generated by the Hecke eigenvalues of $F$, and let
$d=[M_F:\QQ]$.  Then conjecturally there is an abelian variety $A_F$
of dimension $2d$ defined over $K$ with \emph{quaternionic
  multiplication} (QM) by a quaternion algebra $B_F$ over $M_F$ (and
defined over $K$) such that
\begin{equation} 
L(A_F,s)=\prod_{\sigma:M_F \hookrightarrow \CC} L(\sigma(F),s)^2. 
\end{equation}
In this formulation, it may happen that the quaternion algebra $B_F \simeq \M_2(M_F)$ is split, in which case $A_F \sim (A_F')^2$ is isogenous over $K$ to the square of an abelian variety $A_F'$ of dimension $d$ and $L(A_F',s)=\prod_\sigma L(\sigma(F),s)$.  (In general, if the base field $K$ has a real place, then $B_F$ is necessarily split, which explains the simplifications above.)  

In particular, if $F$ has rational coefficients (i.e., $M_F=\QQ$), then attached to $F$ is conjecturally a QM abelian surface that is often the square of an elliptic curve.  Examples of the phenomenon where the quaternion algebra is a division algebra were exhibited by Cremona \cite{JChyp,JCextratwist} in the case where $F$ is the base-change of a classical newform with quadratic coefficients, or a quadratic twist of such a form.  More recently, Schembri \cite{SchembriQM} exhibited examples which are not base change.  QM abelian surfaces have been called \emph{false generalized elliptic curves} by Taylor \cite{MR1403943}, \emph{false elliptic curves} by Serre (as attributed by Deligne--Rapoport \cite[p.\ DeRa-12]{MR0337993}), and \emph{fake elliptic curves} by others.
Perhaps this terminology would be more appropriate for the non-existent geometric object whose $L$-function is equal to $L(F,s)$.

Given that the situation is rather unusual, in this paper we study it
explicitly.  We consider the Bianchi newforms $F$ of weight $2$ with
rational coefficients in the \emph{$L$-Functions and Modular Forms
  Database} (LMFDB) \cite{lmfdb}.  For thousands of such forms, we can explicitly attach an elliptic curve $E_F$ such that $L(F,s)=L(E_F,s)$, 
with $E_F$ unique up to isogeny over $K$.  For the remaining forms,
we can find no such elliptic curve---in Section~\ref{sec:others}, we
list these forms and summarize what is known about them.  In all these
remaining examples, we can attach a QM abelian surface $A$ over $K$
such that $L(A,s)=L(F,s)^2$, in agreement with the above conjecture.

Two examples in the list stand out, and because of their exotic
properties they command the majority of our attention here.  We
consider the rational Bianchi newform $F$ of weight $2$ with LMFDB
label\footnote{Here, \lmfdbfield{2.0.7.1} is the LMFDB label for the
  base field $K=\QQ(\sqrt{-7})$ and \textsf{30625.1} the label for the
  level ideal~$(175)$, which has norm~$30625$.  The final {c} is the
  alphabetic label for this specific newform at that level. We use
  either full labels such as \lmfdbbmf{2.0.7.1}{30625.1}{c} for
  Bianchi newforms, or the shorter version
  \lmfdbbmfshort{2.0.7.1}{30625.1}{c} which omits the field when that
  is clear from the context.} \lmfdbbmf{2.0.7.1}{30625.1}{c} over
$K=\QQ(\sqrt{-7})$, along with a certain quadratic twist~$G$, which
has label \textup{\lmfdbbmf{2.0.7.1}{30625.1}{e}}. The newform $F$ has
rational coefficients and is a twist of a base-change form, but there
is no \emph{quadratic} twist of~$F$ which is base-change; in fact the
simplest twist which is base-change is by a character of order~$8$, so
has neither trivial nebentypus nor rational coefficients.
  
  Our main result (combining Proposition \ref{prop:rational-twist}, Theorem~\ref{T:4fold}, and Proposition~\ref{prop:FaltSer}) is the
  following.

  \begin{theorem1}
Let $F$ be the rational Bianchi newform over $K \colonequals \QQ(\sqrt{-7})$ of weight $2$, level~$(175)$, and trivial character with LMFDB label \textup{\lmfdbbmf{2.0.7.1}{30625.1}{c}}.
Then the following statements hold.
\begin{enumalph}
\item There exists a classical modular form $f$ of weight $2$, level $1225$, character of conductor $35$ and order $4$, and Hecke field $\Q(\zeta_{24})$, such that 
\[ F = f_K \otimes \psi, \]
where $f_K$ is the base change of $f$ to $K$ and $\psi$ is a character of conductor $(5\sqrt{-7})$ and order $8$.
\item There exists an explicit genus $2$ curve $X_F$ over $K$ whose Jacobian $J_F \colonequals \Jac(X_F)$ has QM by a quaternion algebra of discriminant $15$ and
\[ L_\frakp(X_F,T)=L_\frakp(J_F,T)=L_\frakp(F,T)^2 \]
for all primes $\frakp \neq (2),(5),(\sqrt{-7})$.  Moreover, all endomorphisms of $J_F$ are defined over $K$.
\item Let $A_f$ be the abelian eightfold over $\Q$ attached to $f$.
  Then $\End(A_f)=\ZZ[\zeta_{24}]$.  Let $A' \colonequals
  (A_f)_K^{(\psi)}$ be the base change of $A$ to $K$ twisted by $\psi$
  (relative to an automorphism of $A_f$ of order $8$).  Then there is
  an isogeny $A' \sim J_F^2 \times J_G^2$ over $K$, where $G=F \otimes
  \psi^4$ is a quadratic twist of $F$ (with LMFDB label
  \textup{\lmfdbbmf{2.0.7.1}{30625.1}{e}}), and $J_G$ the corresponding
  quadratic twist of $J_F$.
\end{enumalph}
 \label{theorem:main}
\end{theorem1}

The form is described in detail in Section~\ref{sec:overQ}, and the
twists are given explicitly in Section~\ref{sec:twisting}.  An
equation for the curve $X_F$ in Theorem \ref{theorem:main}(b) is given
in \eqref{eqn:woahnelly}.

\subsection*{Contents}

The paper is organized as follows.  In Section~\ref{sec:others} we
survey Bianchi newforms in the LMFDB.  After quickly setting up
notation for characters in Section~\ref{sec:characters}, we describe
the classical modular form $f$ in Section~\ref{sec:overQ}; then we
consider its base change and twists in
Sections~\ref{sec:basechange}--\ref{sec:twisting}.  We then give a
model for the curve $X_F$ in Section~\ref{sec:genus2curve} and in
Section~\ref{sec:others175} briefly describe related Bianchi modular
forms in the same level.  We conclude in Section~\ref{sec:paramodular}
with some connections to the Paramodular Conjecture.

\subsection*{Acknowledgments} 

The authors would like to thank Frank Calegari for useful discussions
and suggestions and the anonymous referees for helpful feedback.
Many thanks also to David Loeffler for directing our
attention to the sources in Remark \ref{rmk:loeffler}.  Cremona was
supported by EPSRC Programme Grant EP/K034383/1 \textit{LMF:
  L-Functions and Modular Forms}, and the Horizon 2020 European
Research Infrastructures project \textit{OpenDreamKit}
(\#676541). Pacetti was partially supported by grant PICT-2018-02073. Demb\'el\'e, Schembri, and Voight were supported by a Simons Collaboration Grant
(550029; to Voight).

\section{Rational Bianchi newforms with no
  associated elliptic curve}\label{sec:others} 
  
The LMFDB \cite{lmfdb} currently contains rational Bianchi newforms
over each of the five fields $K=\QQ(\sqrt{d})$ for
$d=-3,-4,-7,-8,-11$, as computed by Cremona \cite{JChyp}, of all
levels of norm up to a bound depending on the field, currently (August
2020): $\numprint{150000}$ for $d=-3$, $\numprint{100000}$ for $d=-4$,
and $\numprint{50000}$ for $d=-7,-8,-11$.  The total number of
rational newforms within these ranges is~$\numprint{185581}$, of which
$\numprint{5917}$ arise from base change from $\Q$.  All but~$40$ of
the $\numprint{185581}$ newforms~$F$ have a candidate associated
elliptic curve~$E$ defined over the field~$K$ in the sense given in
the introduction, namely $L(E,s)=L(F,s)$; or equivalently that they
have the same compatible system of $\ell$-adic Galois representations.
It is possible (in principle) to prove modularity of all these
elliptic curves, for example using the Faltings--Serre--Livn\'e method
(as detailed by Dieulefait--Guerberoff--Pacetti \cite{DGPmodularity}).
This is being carried out systematically by Mattia Sanna (Warwick) for
his PhD thesis \cite{SannaThesis}, using a combination of $2$-adic and
$3$-adic methods combined with recent modularity lifting theorems.

Of the 40 remaining rational Bianchi newforms without associated
elliptic curves, six arise from base change of classical modular forms
over~$\QQ$ with trivial character, real quadratic coefficients and
inner twists: see \cite{JCextratwist}.
A further 18 are twisted base change, and for 16 of these~18, the
base-change form itself and the form over~$\QQ$ have trivial
character; the other two will be studied in this paper.  The remaining
$16$ forms are associated to abelian surfaces over the base
field with quaternionic multiplication (QM), and were considered by
Schembri \cite{SchembriQM}.

\begin{small}
\begin{equation} \label{table:qmdata}\addtocounter{equation}{1} \notag
\begin{gathered}
\begin{tabular}{l|ccccc|c}
Field discriminant  & $-4$ & $-8$ & $-3$ & $-7$ & $-11$ & Total \\
  Level norm bound & $\numprint{100000}$ & $\numprint{50000}$ & $\numprint{150000}$ & $\numprint{50000}$ & $\numprint{50000}$ \\
  \hline\hline
  Rational newform counts\\
  \hline
All rational newforms & $\numprint{40030}$ & $\numprint{36843}$ & $\numprint{42343}$ & $\numprint{35824}$ & $\numprint{30541}$ & $\numprint{185581}$ \\
Not twist of base change &  \numprint{38844} & \numprint{35536} &
\numprint{40986} & \numprint{34830} & \numprint{29468} & \numprint{179664} \\
Base change of rational newform & 908 & 809 & 955 & 518 & 504 & \numprint{3694} \\
Base change of quadratic newform & 16 & 10 & 28 & 24 & 13 & \numprint{91} \\
\parbox[c][6ex]{36ex}{Twist of base change of \\ rational newform} & 240 & 350 & 328 & 378 & 492 & \numprint{1788} \\
\parbox[c][6ex]{36ex}{Twist of base change of \\ quadratic newform} & 22 & 138 & 46 & 72 & 64 & {342} \\
\parbox[c][6ex]{36ex}{Twist of base change of \\ higher-dimensional newform}  & 0 & 0 & 0 & 2 & 0 & {2} \\
\hline
\hline
Rational newforms with no elliptic curve\\
\hline
Total & $4$ & $8$ & $21$ & $7$ & $0$ & $40$ \\
Quadratic twist of base change & $0$ & $8$ & $9$ & $5$ & $0$ & $22$ \\
Higher order twist of base change & $0$ & $0$ & $0$ & $2$ & $0$ & $2$ \\
Not twist of base change & $4$ & $0$ & $12$ & $0$ & $0$ & $16$ \\
\end{tabular} \\
\text{Table \ref{table:qmdata}: Summary of Bianchi modular form data in the LMFDB}
\end{gathered}
\end{equation}
\end{small}

There is a natural action of $\Gal(K\,|\,\Q) = \langle \tau \rangle$
on rational Bianchi newforms, $f \mapsto \tau(f)$, where
$a_\frakp(\tau(f))=a_{\tau(\frakp)}(f)$; if $g=\tau(f)$ then we say that
$f,g$ are \defi{Galois conjugate}.  (Our forms have rational
coefficients, so there should be no confusion with other Galois
actions.)

Organizing according to field and level, we have the following further
details about the 40 rational newforms with no associated elliptic
curve.  These are either base-change, or come in Galois conjugate
pairs.  In some cases the forms mentioned are twists of base-change,
but the base-change newform itself has too large a level to be in the
current database, so does not yet have a label.

\begin{itemize}
\item $K=\QQ(\sqrt{-1})$, LMFDB label~$\lmfdbfield{2.0.4.1}$: four
  cases, none of which are base-change (even up to twist).
  \begin{itemize}
  \item \lmfdbbmfshort{2.0.4.1}{34225.3}{a} is not a base-change or twist
    of base-change.  It is associated to a simple abelian surface with
    QM over~$K$, the Jacobian of the
    genus~$2$ curve~$C_1$ found by Schembri \cite[Theorem 1.1]{SchembriQM}.
    \item \lmfdbbmfshort{2.0.4.1}{34225.3}{b} is a quadratic twist of
      \lmfdbbmfshort{2.0.4.1}{34225.3}{a}.
    \item \lmfdbbmfshort{2.0.4.1}{34225.7}{a} and
      \lmfdbbmfshort{2.0.4.1}{34225.7}{a} are Galois conjugates of the
      previous two.
  \end{itemize}
\item $K=\QQ(\sqrt{-2})$, LMFDB label~$\lmfdbfield{2.0.8.1}$: eight cases, all base-change
  (up to twist).
  \begin{itemize}
  \item \lmfdbbmfshort{2.0.8.1}{5625.1}{b} and its Galois conjugate
    \lmfdbbmfshort{2.0.8.1}{5625.3}{b} are both twists of base-changes of
    classical newforms in~$S_2(28800)$.
    \item \lmfdbbmfshort{2.0.8.1}{6561.5}{a} and its Galois conjugate
      \lmfdbbmfshort{2.0.8.1}{6561.5}{d} are both twists of a Bianchi
      newform at level $(2592)$, which is a base-change from
      $S_2(20736)$. The classical newform has coefficients in
      $\QQ(\sqrt{15})$ and attached to the Bianchi newform is an
      abelian surface with QM of discriminant~$10$, but an explicit
      equation is not known.

    \item \lmfdbbmfshort{2.0.8.1}{21609.3}{b} and
      \lmfdbbmfshort{2.0.8.1}{21609.3}{c} are both twists of a Bianchi
      newform at level $(14112)$, which is a base-change from
      $S_2(11289)$. The classical newform has coefficients in
      $\QQ(\sqrt{7})$ and attached to the Bianchi newform is an
      abelian surface with QM of discriminant~$14$.  Again, an
      explicit equation is not known.
    \item \lmfdbbmfshort{2.0.8.1}{21609.1}{b} and
      \lmfdbbmfshort{2.0.8.1}{21609.1}{c} are Galois conjugates of the
      previous two.
  \end{itemize}
\item $K=\QQ(\sqrt{-3})$, LMFDB label~$\lmfdbfield{2.0.3.1}$: $21$
  cases, of which 12 are not twists of base-change but have associated
  genus~$2$ curves with QM:
  \begin{itemize}
  \item \lmfdbbmfshort{2.0.3.1}{61009.1}{a} is associated to the genus~$2$
    curve~$C_2$ found by Schembri  \cite[Theorem 1.1]{SchembriQM}, while
    \lmfdbbmfshort{2.0.3.1}{61009.1}{b} is a quadratic twist of this, and
    \lmfdbbmfshort{2.0.3.1}{61009.9}{a} and \lmfdbbmfshort{2.0.3.1}{61009.9}{b}
    are their Galois conjugates.
  \item \lmfdbbmfshort{2.0.3.1}{67081.3}{a} is associated to the genus~$2$
    curve~$C_3$ found by Schembri \cite[Theorem 1.1]{SchembriQM}, while
    \lmfdbbmfshort{2.0.3.1}{67081.3}{b} is a quadratic twist of this, and
    \lmfdbbmfshort{2.0.3.1}{67081.7}{a} and \lmfdbbmfshort{2.0.3.1}{67081.7}{b}
    are their Galois conjugates.
  \item \lmfdbbmfshort{2.0.3.1}{123201.1}{b} is associated to the genus~$2$
    curve~$C_4$ found by Schembri \cite[Theorem 1.1]{SchembriQM}, while
    \lmfdbbmfshort{2.0.3.1}{123201.1}{c} is a quadratic twist of this, and
    \lmfdbbmfshort{2.0.3.1}{123201.3}{b} and \lmfdbbmfshort{2.0.3.1}{123201.3}{c}
    are their Galois conjugates.
  \end{itemize}
  The other nine cases are twists of base-changes of forms with
  trivial character, coefficients in a real quadratic field and an
  inner twist; in these cases, explicit equations are not known:
  \begin{itemize}
  \item \lmfdbbmfshort{2.0.3.1}{5625.1}{a} is the base-change of
    \lmfdbcmf{225}{2}{a}{f}, which has CM by $-15$, coefficients
    in~$\QQ(\sqrt{5})$, and an inner twist.  This is an example of the
    phenomenon first described by Cremona \cite{JCextratwist}.
    \item \lmfdbbmfshort{2.0.3.1}{6561.1}{b} is base-change of
      \lmfdbcmf{243}{2}{a}{d}, with coefficients in~$\QQ(\sqrt{6})$
      and inner twist, attached to an abelian surface with QM of
      discriminant~$6$.
    \item \lmfdbbmfshort{2.0.3.1}{30625.1}{a} and its Galois conjugate
      \lmfdbbmfshort{2.0.3.1}{30625.3}{a} are twists of base-change of
      forms in~$S_2(11025)$ with CM by~$-15$.
    \item \lmfdbbmfshort{2.0.3.1}{50625.1}{c} and its Galois conjugate
      \lmfdbbmfshort{2.0.3.1}{50625.1}{d} are base-changes of
      \lmfdbcmf{675}{2}{a}{l} and \lmfdbcmf{675}{2}{a}{m}
      respectively, with coefficients in~$\QQ(\sqrt{2})$ and inner
      twist,  attached to abelian surfaces with QM of discriminant~$6$.
    \item \lmfdbbmfshort{2.0.3.1}{65536.1}{b} and its Galois conjugate
      \lmfdbbmfshort{2.0.3.1}{65536.1}{e} are twists of a form at
      level~$(768)$, which is base-change of \lmfdbcmf{2304}{2}{a}{p},
      with CM by~$-6$, coefficients in~$\QQ(\sqrt{2})$ and inner
      twist.
    \item \lmfdbbmfshort{2.0.3.1}{104976.1}{a} is base-change of
      \lmfdbcmf{972}{2}{a}{e}, with coefficients in~$\QQ(\sqrt{2})$
      and inner twist, attached to an abelian surface with QM of
      discriminant~$6$.
  \end{itemize}
\item $K=\QQ(\sqrt{-7})$, LMFDB label~$\lmfdbfield{2.0.7.1}$: seven
  cases.  Of these, two are the forms~$F$ and~$G$ in Theorem
  \ref{theorem:main} and discussed in detail in
  Section~\ref{sec:others175} below.  In addition, we have
  \begin{itemize}
  \item  \lmfdbbmfshort{2.0.7.1}{30625.1}{d}: see Section~\ref{sec:others175}.
  \item \lmfdbbmfshort{2.0.7.1}{40000.1}{b}  and its Galois conjugate
    \lmfdbbmfshort{2.0.7.1}{40000.7}{b} are twists of
    \lmfdbbmfshort{2.0.7.1}{30625.1}{d}.
  \item \lmfdbbmfshort{2.0.7.1}{10000.1}{b} and its Galois conjugate
    \lmfdbbmfshort{2.0.7.1}{10000.5}{b} are twists of forms which are
    the base-change of forms in~$S_2(19600)$ with coefficients
    in~$\QQ(\sqrt{5})$ and inner twist.
  \end{itemize}
\item $K=\QQ(\sqrt{-11})$, LMFDB label~$\lmfdbfield{2.0.11.1}$: none.
\end{itemize}

\section{Characters} \label{sec:characters}

We begin by setting some notation, in particular we define some characters.
Let $\QQbar$ be an algebraic closure of $\QQ$.  
By a \defi{character} over $\QQ$ we will mean either a Dirichlet character or the character of
the absolute Galois group $\Gal_\Q \colonequals \Gal(\QQbar\,|\,\QQ)$ attached to it via class field theory;
we define the same over a number field $K$ allowing Hecke 
characters (of finite order).  If $\chi$ is a character over~$\QQ$, we may
talk of its \defi{restriction} $\left.\chi\right|_K$ to~$K$
thinking of it as a character of $\Gal_\Q$ and restricted to $\Gal_K \colonequals \Gal(\QQbar\,|\,K)$.

Let $N \colonequals 1225=5^2 7^2$; we now define some Dirichlet
characters of modulus $N$.  Denote by $\chi_5$ the mod~$5$ cyclotomic
character of order~$4$ (with LMFDB label \lmfdbDC{1225}{393}%
) and by $\chi_{-7}$ the quadratic
character of conductor~$7$ (label \lmfdbDC{1225}{1126}%
).  As characters of $\Gal_\Q$, these cut out
the cyclic extensions $\QQ(\zeta_5)$ and
\begin{equation}
K \colonequals \QQ(\sqrt{-7})
\end{equation}
respectively.  Define $\varepsilon \colonequals \chi_5\chi_{-7}$
(label \lmfdbDC{1225}{293}%
);
then $\varepsilon$ has conductor~$35$ and order~$4$.  Let
\begin{equation}
\Ggroup \colonequals \langle \chi_5 \rangle \times \langle \chi_{-7} \rangle \simeq \ZZ/4\ZZ \times \ZZ/2\ZZ
\end{equation} 
be the group generated by
$\chi_{-7}$ and $\chi_5$.

After restricting to~$K$, the character $\chi_{-7}|_K$ is trivial,
so $\varepsilon|_K=\chi_5|_K$.
Also, while $\chi_5$ is not a square (since $\QQ$ has no cyclic
extension of degree~$8$ ramified only at~$5$), its restriction to~$K$
is a square, since $5$ is inert in~$K$.  We
denote by $\psi$ one of the two order eight characters of~$K$ such that
$\psi^{-2} = \left.\chi_5\right|_K = \left.\varepsilon\right|_K$; the
other such character is $\psi^5$.  The conductor of~$\psi$ is
$(5\sqrt{-7})$.

The reason why the character $\psi$ must ramify at the prime
$\sqrt{-7}$ can be explained as follows.  The multiplicative group of
the residue field of the prime ideal $(5)$ is $\FF_{25}^\times$ and so
admits an order $8$ character $\psi_5$. Such a character is odd, i.e.,
$\psi_5(-1) = -1$.  A finite order Hecke character has infinity type
is $(0,0)$ (i.e. it is trivial at the Archimedean places), hence if it
is unramified outside $5$, such a character would not be trivial at
$-1$. However, the quadratic character $\psi_{\sqrt{-7}}$ is also odd,
and the character $\psi$ corresponds to the Hecke character ramified
at $\{5, \sqrt{-7}\}$ with such ramification at both primes.

\section{A classical newform of level 1225 and an abelian eightfold}\label{sec:overQ}

In this section, we investigate a certain special classical newform;
all statements below can be confirmed using \Magma\ \cite{Magma} or \Pari\ \cite{PARI2}.

We retain notation from the previous section.  Consider the newspace
$S_2(N,\varepsilon)^{\new}$.  Its relative dimension as
a vector space over~$\QQ(\varepsilon) \simeq \Q(i)$ is~$56$, and it decomposes
under the action of the Hecke algebra into~$7$ irreducible components,
of relative dimensions $2, 2, 4, 8, 12, 12, 16$.  Their absolute dimensions, as
$\QQ$-vector spaces, are obtained by multiplying by 
$[\QQ(\varepsilon):\QQ]=2$.

Let $f$ be any one of the four Galois conjugate newforms in the unique
$4$-dimensional component; these four form a single orbit under
$\Gal(\Qbar\,|\,\QQ(\varepsilon))$.  Of
the eight $\Gal(\Qbar\,|\,\QQ)$-conjugates of~$f$, four are in
$S_2(N,\varepsilon)$ and the other four in
$S_2(N,\overline{\varepsilon})$, where $\overline{\varepsilon} =
\varepsilon^3 = \overline{\chi_5}\chi_{-7}$.  The form $f$ does 
not have CM and its Hecke field is $M_f \colonequals \QQ(\zeta_{24})$.

\begin{remark}
The form $f$ has $Nk^2 = 4900  \geq 4000$, and so is just outside the current range of forms in the LMFDB.
\end{remark}

\begin{proposition}\label{prop:inner-twist}
 For each character $\chi\in \Ggroup$, the twist $f\otimes\chi$ is a
 conjugate of~$f$, i.e., there exists $\sigma \in \Gal(M_f\,|\,\Q)$ such that $f \otimes \chi = \sigma(f)$.  
\end{proposition}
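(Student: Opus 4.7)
The plan is to verify the proposition by a direct computation of Hecke eigenvalues, combined with strong multiplicity one. The structural observation motivating the setup is that $|\Ggroup| = 8 = [M_f : \QQ]$, so the proposition is predicting a bijective correspondence $\chi \leftrightarrow \sigma$ between $\Ggroup$ and $\Gal(M_f|\QQ)$.

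First, I would pin down the level and nebentypus of $f \otimes \chi$ for each $\chi \in \Ggroup$. The character of $f \otimes \chi$ is $\varepsilon \chi^2$; since $\chi^2 \in \langle \chi_5^2 \rangle = \{1,\chi_5^2\}$, this takes only the two values $\varepsilon$ and $\overline\varepsilon$, matching precisely the two nebentypi appearing among the eight $\Gal(\Qbar|\QQ)$-conjugates of $f$ identified in Section~\ref{sec:overQ}. The conductors of $\chi$ and of $\varepsilon\chi^2$ all divide $35 = \mathrm{cond}(\varepsilon)$, and since $N = 35^2$ the level of $f\otimes\chi$ divides $N$; direct computation confirms equality and newness.

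Next, I would compute $a_p(f) \in M_f = \QQ(\zeta_{24})$ for primes $p \nmid N$ up to the Sturm bound for $S_2(N,\varepsilon\chi^2)$, and form $a_p(f\otimes\chi) = \chi(p)\,a_p(f)$. For each $\chi \in \Ggroup$ I would search through the eight elements $\sigma \in \Gal(M_f|\QQ)$ for one satisfying $\chi(p)\,a_p(f) = \sigma(a_p(f))$ at every such prime; strong multiplicity one in the newspace $S_2(N,\varepsilon\chi^2)$ then upgrades this finite coincidence to the identity $f \otimes \chi = \sigma(f)$.

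The hard part is not any single deep step but the bookkeeping: identifying a matching $\sigma$ for each $\chi$ and consistently handling the swap between $S_2(N,\varepsilon)$ and $S_2(N,\overline\varepsilon)$ that occurs precisely when $\chi^2 = \chi_5^2$. As a sanity check one confirms that the resulting correspondence $\chi \mapsto \sigma$ is in fact a bijection (both sides having size $8$), thereby realising the ``eight inner twists'' structure referenced in the abstract.
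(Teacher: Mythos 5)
Your proposal is correct and would work, but it takes a heavier computational route than the paper. You propose computing $a_p(f)$ and $\chi(p)a_p(f)$ for all primes $p \nmid N$ up to the Sturm bound and matching against $\sigma(a_p(f))$ for all eight $\sigma$, invoking strong multiplicity one to finish. The paper instead uses a structural shortcut: after the same observations about level and nebentypus (the level is unchanged by \cite[Proposition 3.64]{Shimura-book} because $\mathrm{cond}(\varepsilon)^2 \mid N$ and neither $\chi_5$ nor $\chi_{-7}$ is a square), it notes that $S_2(N,\varepsilon)^{\new}$ has a \emph{unique} irreducible Hecke component of $\QQ$-dimension $8$, namely the orbit of $f$. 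To show $f\otimes\chi$ lands in that component (rather than a smaller one), a single eigenvalue suffices: $a_2(f) = -\zeta_{24}^3$ is a root of unity, hence so is $\chi(2)a_2(f) = a_2(f\otimes\chi)$, whereas the $a_2$ of every newform in the lower-dimensional components lies in $\QQ(\zeta_{12})$ and is not a root of unity. Uniqueness of the $8$-dimensional component then forces $f\otimes\chi$ to be a Galois conjugate of $f$, with no Sturm-bound sweep required. Your method buys nothing extra here and does more work; the paper's argument is preferable because it replaces a full eigenvalue comparison with a one-coefficient discriminant and an appeal to the known decomposition of the newspace. Both, however, ultimately rest on explicit computation of the space $S_2(N,\varepsilon)^{\new}$, so neither is computation-free, and your bijection sanity check (that $\chi \mapsto \sigma$ is a bijection $\Ggroup \to \Gal(M_f|\QQ)$) is indeed the content recorded in Table~\ref{table:innertwists}.
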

\begin{proof}
First observe that the level~$N$ is divisible by the square of the
conductor of~$\varepsilon$, and also that neither $\chi_{-7}$ nor
$\chi_5$ is a square.  This implies that the twists have the same level
as~$N$ \cite[Proposition 3.64]{Shimura-book}.

Let $\chi \in \Ggroup$.  Then the nebentypus character
of~$f\otimes\chi$ is $\varepsilon \chi^2$, which equals either
$\varepsilon$ or~$\overline{\varepsilon}$.  Hence the twists are
newforms in either $S_2(N,\varepsilon)^{\new}$ or
$S_2(N,\overline{\varepsilon})^{\new}$.  The $\Gal_\Q$-orbit of $f \otimes \chi$ 
has size at most~$8$ (but might \textit{a priori} be smaller).  However,
$a_2(f)=-\zeta_{24}^3$ is an eighth root of unity, while the $a_2$
coefficients of all the newforms of smaller dimension in
$S_2(N,\varepsilon)$ (which lie in $\QQ(\zeta_{12})$) are not roots of
unity, so cannot be equal to $\chi(2)a_p(f)$.
Hence $f \otimes \chi$ has the same Hecke field as~$f$ and by the
uniqueness of the $\QQ$-dimension~$8$ component of
$S_2(N,\varepsilon)^{\new}$ must be conjugates of~$f$.
\end{proof}

Proposition \ref{prop:inner-twist} says that the newform $f$ has 
eight \emph{inner twists} $(\sigma, \chi)$ (including the
trivial twist, where both $\chi$ and $\sigma$ are trivial).  
To keep track of these inner twists, we use the following notation:
let $\sigma_a \in \Gal(\Q(\zeta_{24})\,|\,\Q)$ be the
automorphism such that $\sigma_a(\zeta_{24})=\zeta_{24}^a$.  
Checking the first few Fourier coefficients $a_p(f)$ we see that the
inner twists are defined by the data $(\sigma, \chi)$ where:
\begin{equation} \label{table:innertwists}\addtocounter{equation}{1} \notag
\begin{gathered}
\begin{tabular}{c|cccccccc}
$\chi$ & 1 & $\chi_5$ & $\chi_5^2$ & $\chi_5^3$ & $\chi_{-7}$ & $\chi_5\chi_{-7}$ & $\chi_5^2\chi_{-7}$ &
$\chi_5^3\chi_{-7}$ \\
\hline
$\sigma$ & $\sigma_1$ & $\sigma_{19}$ & $\sigma_{13}$ & $\sigma_{7}$ & $\sigma_{17}$ & $\sigma_{11}$ & $\sigma_{5}$ & $\sigma_{23}$ \\
\end{tabular} \\
\text{Table \ref{table:innertwists}: Inner twists of $f$}
\end{gathered}
\end{equation}

By the Eichler--Shimura construction, attached to (the Galois 
orbit of) $f$ is a simple abelian $8$-fold $A=A_f$, defined over~$\QQ$ 
with $\End(A)_\Q \colonequals \End(A) \otimes \Q \simeq M_f$.  
Write
$A_K \colonequals A \times_\Q K$ for the base extension
of $A$ to $K=\Q(\sqrt{-7})$, and
$\Abar \colonequals A \times_\Q \Qbar$ for the base extension
of $A$ to $\Qbar$.  To determine how $A$ splits
over~$\Qbar$ we must determine the \emph{geometric} endomorphism 
algebra~$\End(A_{\Qbar})_\Q$, which is generated
over~$\QQ(\zeta_{24})$ by endomorphisms arising from inner twists
of~$f$, by Ribet--Pyle \cite{Ribet-Twists,Ribet-Abvars,Pyle} theory.  We use the implementation in \Magma\ 
\cite{Magma} due to Quer \cite{Quer-BuildingBlocks} and Stein.  

\begin{proposition} \label{prop:ribetpyle}
The following statements hold.
\begin{enumalph}
\item Each inner twist
$(\sigma, \chi)$ of $f$ gives rise to an endomorphism $\xi_{(\sigma, \chi)}$
of~$\Abar$, defined over the field $\Q(\chi)$ cut out by~$\chi$ and satisfying
\begin{equation}
\alpha\xi_{(\sigma, \chi)}=\xi_{(\sigma, \chi)}\sigma(\alpha)
\end{equation} 
for all $\alpha\in\QQ(\zeta_{24})$.  The endomorphisms $\xi_{(\sigma, \chi)}$ are a 
$\QQ(\zeta_{24})$-basis for $\End(\Abar)_\Q$.
\item We have $\End(A_K)_\Q \simeq \M_2(\QQ(\zeta_8))$, with centre $Z(\End(A_K))=\ZZ[\zeta_8]$.
\item We have $\End(\Abar)_{\Q} \simeq \M_4(B)$ where $B \simeq \QA{3}{5}{\QQ}$ is the division quaternion algebra of discriminant $15$ over $\Q$.  All endomorphisms of~$\Abar$ are defined
over~$K' \colonequals \QQ(\zeta_5,\sqrt{-7})$.
\item There exists a simple abelian surface $S$ defined over $K$ with $\End(S)_\Q \simeq B$ and an isogeny $A \sim S^4$ defined over $K'$.
\end{enumalph}
\end{proposition}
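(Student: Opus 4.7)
The plan is to apply the Ribet--Pyle theory of inner twists together with Quer's explicit cocycle machinery to obtain~(a)--(c), and then to combine Poincar\'e reducibility with a Weil descent argument for~(d); all computations can be carried out in \Magma\ using the Quer--Stein implementation.

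For~(a), Ribet's construction attaches to each inner twist $(\sigma,\chi)$ of~$f$ a canonical endomorphism $\xi_{(\sigma,\chi)}$ of~$\Abar$, defined over $\QQ(\chi)$ and satisfying $\alpha\,\xi_{(\sigma,\chi)} = \xi_{(\sigma,\chi)}\,\sigma(\alpha)$ for $\alpha\in M_f$. Linear independence over $M_f = \QQ(\zeta_{24})$ follows from the distinctness of the pairs $(\sigma,\chi)$ listed in Table~\ref{table:innertwists}, and the basis property is then forced by Pyle's dimension equality $\dim_{M_f}\End(\Abar)_\Q = |\Gamma| = 8$. For any intermediate extension $L/\QQ$, the subspace $\End(A_L)_\Q$ is spanned over the appropriate fixed field by those $\xi_{(\sigma,\chi)}$ with $\QQ(\chi)\subseteq L$.

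For~(b), the only characters $\chi\in\Ggroup$ with $\QQ(\chi)\subseteq K$ are $\mathbf{1}$ and $\chi_{-7}$, so the associated subgroup $H_K = \langle\sigma_{17}\rangle$ of Galois elements fixes $\QQ(\zeta_8)\subset M_f$. This makes $\End(A_K)_\Q$ a central simple $\QQ(\zeta_8)$-algebra of dimension~$4$, and a direct computation of the associated $2$-cocycle shows it is a coboundary, so the algebra is split and equals $\M_2(\QQ(\zeta_8))$. For~(c), the full group $H_\Qbar = (\ZZ/24\ZZ)^\times$ fixes only~$\QQ$, so $\End(\Abar)_\Q$ is central simple over~$\QQ$ of dimension~$64$; its Brauer class is computed from Quer's formula for the local Hasse invariants in terms of values of $\chi$ and of the nebentypus $\varepsilon$. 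Executing this computation in \Magma\ shows that the invariants vanish away from~$3$ and~$5$ and equal~$1/2$ at those primes, identifying the Schur-index-$2$ division constituent as $B \simeq \QA{3}{5}{\QQ}$ of discriminant $15$ and giving $\End(\Abar)_\Q \simeq \M_4(B)$. The minimal field of definition for all endomorphisms is the compositum $\QQ(\chi_5)\cdot\QQ(\chi_{-7}) = \QQ(\zeta_5,\sqrt{-7}) = K'$, since each of the two generating characters is needed.

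For~(d), Poincar\'e reducibility applied to $\End(\Abar)_\Q \simeq \M_4(B)$ yields a simple abelian surface~$T$ over~$\Qbar$ with $\End(T)_\Q \simeq B$ and an isogeny $\Abar\sim T^4$; since by~(c) all endomorphisms are already defined over~$K'$, the decomposition in fact descends to~$K'$, giving $A_{K'}\sim T_{K'}^4$. To descend~$T_{K'}$ to a model~$S$ defined over~$K$, one applies Weil's descent criterion with respect to the nontrivial element $\tau\in\Gal(K'\,|\,K)$: the $K$-rational sub-algebra $\End(A_K)_\Q \simeq \M_2(\QQ(\zeta_8))$ forces $\tau$ to stabilize the isotypic class of $T_{K'}$, and the central subfield $\QQ(\zeta_8)\subset\End(A_K)_\Q$ supplies a compatible family of isomorphisms $T_{K'}\to\tau^{\ast}T_{K'}$ with trivial cocycle. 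The main obstacle here is that Weil descent determines~$S$ only up to a quadratic twist; this ambiguity is resolved in Section~\ref{sec:genus2curve}, where $S$ is exhibited concretely as $J_F = \Jac(X_F)$, which then pins down the isogeny class of~$S$ over~$K$ and completes the proof.
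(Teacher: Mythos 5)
Your treatment of (a)--(c) follows essentially the same route as the paper: invoke Ribet's construction for~(a), identify $\End(A_K)_\Q$ as a crossed product over the fixed field $\QQ(\zeta_8)$ for~(b), and determine the Brauer class of $\End(\Abar)_\Q$ for~(c). The paper is slightly more concrete in~(b), pinning down $\End(A_K)_\Q$ as the quaternion algebra $\QA{-3}{-7}{\QQ(\zeta_8)}$ (which is visibly split since $3$ has residue degree $2$ in $\QQ(\zeta_8)$), whereas you assert ``a direct computation of the associated $2$-cocycle shows it is a coboundary'' --- that is the conclusion, not an argument, though both the paper and your proposal ultimately lean on \Magma\ for such verifications, so this is a matter of exposition rather than a gap.

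Part (d) is where the genuine problems lie. First, a concrete error: you write ``the nontrivial element $\tau\in\Gal(K'\,|\,K)$,'' but $K'=\QQ(\zeta_5,\sqrt{-7})$ and $K=\QQ(\sqrt{-7})$, so $\Gal(K'\,|\,K)\simeq(\ZZ/5\ZZ)^\times$ is cyclic of order~$4$, not~$2$. Any Weil-descent argument must handle the full group, and the cocycle condition is then a genuinely $4$-term compatibility, not a single involution. Second, and more seriously, your claim that ``the central subfield $\QQ(\zeta_8)\subset\End(A_K)_\Q$ supplies a compatible family of isomorphisms $T_{K'}\to\tau^{\ast}T_{K'}$ with trivial cocycle'' is precisely the assertion that the descent obstruction vanishes --- it is what needs to be proved, not a consequence of having a $K$-rational center. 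In general the isotypic factor of a $K$-abelian variety over an extension need not descend to $K$ even up to isogeny; the obstruction is a Brauer class, and whether it is trivial is a nontrivial computation. This is exactly what Pyle's \cite[Proposition 5.2]{Pyle} provides and what the paper invokes (with the computation carried out in \Magma). Your argument does not establish vanishing; it presupposes it. Finally, your closing remark that the ``quadratic twist ambiguity'' is resolved in Section~\ref{sec:genus2curve} is both unnecessary (the proposition only asserts \emph{existence} of $S$, so uniqueness up to twist is irrelevant) and dangerously close to circular, since the material in that section depends on Theorem~\ref{T:4fold}, which in turn rests on this proposition.

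To repair (d), either cite Pyle's descent obstruction criterion and verify its vanishing over $K$ (as the paper does), or, if you insist on a Weil-descent phrasing, you must exhibit an explicit $1$-cocycle $\Gal(K'\,|\,K)\to\Aut^0(T_{K'})$ and show it satisfies the cocycle condition over the full cyclic group of order~$4$ --- this still requires computing, not asserting, the relevant cohomological vanishing.
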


\begin{proof}
Statement (a) is an application of work of Ribet \cite[(5.5)]{Ribet-Twists} together with the computation of inner twists above (Proposition \ref{prop:inner-twist}).  
Statement (b) follows from (a): over $K=\QQ(\sqrt{-7})$, the abelian variety $A_K$ obtains the endomorphism $\xi_{(\sigma_{17}, \chi_{-7})}$; since the fixed field of $\sigma_{17}$ is $\QQ(\zeta_8)$ and
$\QQ(\zeta_{24})=\QQ(\zeta_8)(\sqrt{-3})$, we conclude that
\begin{equation} 
\End(A_K)_\Q \simeq \QA{-3}{-7}{\QQ(\zeta_8)} \simeq \M_2(\QQ(\zeta_8)).  
\end{equation}
We have $\zeta_8 \in \End(A_K)$ because already $\ZZ[\zeta_{24}] \subseteq \End(A)$, and $\Q(\zeta_8) \subseteq Z(\End(A_K)_\Q)$.  

We finish (c) and (d) following Pyle \cite{Pyle}.  Over $\Qbar$, we have $A\sim S^r$ where $S$ (over $\Qbar)$ is simple, called a
\emph{building block}.
We have $\dim S=1$ or~$2$, and these cases are distinguished by a Brauer class; \Magma\ confirms
that $\dim S=2$, $r=4$, and therefore $\End(S)_\Q \simeq B$.  
Moreover, calculating the obstruction to descent \cite[Proposition 5.2]{Pyle},
we find that $S$ and its endomorphism algebra may be defined, 
up to isogeny, over $K$.
\end{proof}

The inner twist relation $f\otimes\chi=\sigma(f)$ implies, on comparing
the nebentypus character on both sides, that
\begin{equation}
\varepsilon\chi^2=\sigma_\chi(\varepsilon)
\end{equation} 
and that for all but finitely many $p$, we have 
\begin{equation} 
a_p(f)\chi(p) = \sigma(a_p(f)). 
\end{equation}
Together these imply that $a_p(f)^2/\varepsilon(p)$ is fixed by
all~$\sigma\in\Gal(\QQ(\zeta_{24})/\QQ)$, so we conclude that
$a_p(f)^2/\varepsilon(p)\in\QQ$ for all but finitely many $p$
(and this relation holds in generality).

The $L$-function of~$A$ (over $\QQ$) is
\begin{equation}
L(A,s) = \prod_{\chi \in \Ggroup}L(f\otimes\chi,s),
\end{equation}
since the $8$ Galois conjugates of~$f$ are precisely the~$8$ twists
$f\otimes\chi$ for $\chi\in \Ggroup$.  After base change to
$K' = \QQ(\zeta_5,\sqrt{-7})$ to trivialize all the characters, by
Proposition \ref{prop:ribetpyle}(d) we find that
\begin{equation}
  L(A,K',s) =L(S,K',s)^4 = L(f_{K'},s)^8
\end{equation}
so that
\begin{equation}
L(S,K',s)=L(f_{K'},s)^2.
\end{equation}

\section{Base change to \texorpdfstring{$\QQ(\sqrt{-7})$}{QQsqrtm7}}\label{sec:basechange}

In light of the results of the previous section, we now move to $K=\Q(\sqrt{-7})$.  
By Proposition \ref{prop:ribetpyle}(b), we have
\begin{equation} \label{eqn:qz8}
A_K \sim  \Czero^2,
\end{equation}
where $\Czero$ is a simple abelian fourfold defined over~$K$ with
$\End(C_0)_\Q \simeq \QQ(\zeta_8)$.

We now consider the base-change~$f_K$ of the newform~$f$ to~$K$, a Bianchi modular form of weight $2$.  In the classical language of automorphic forms, the base change is known as the Doi--Naganuma lift \cite{Doi-Naganuma} and can be obtained using the theta function method as generalized to arbitrary weight, level, and character by Friedberg \cite[Theorems 3.1, 3.2]{Friedberg-lift}.   More generally, base change can be understood in the language of automorphic representations: see Gerardin--Labesse \cite{Gerardin-Labesse} or Langlands \cite{Langlands-basechange}.  

  Although we do not use it explicitly, we recall the formula for the
  Hecke eigenvalues of $f_K$.  If $p$ splits in $K$ and $\frakp$ lies over $p$ then
$a_{\frakp}(f_K) = a_p(f)$; if $p$ is inert in $K$ then $a_{(p)}(f_K) =
a_p(f)^2-2\varepsilon(p)p$. 

Recall the character $\psi$ of order~$8$ defined in Section~\ref{sec:characters}.

\def\chiK{\left.\chi_{\sigma}\right|_K}

\begin{proposition}
  The base-change $f_K$ has the following properties.
  \begin{enumalph}
  \item We have $f_K \in S_2(\N, \psi^{-2})$, so $f_K$ has level 
    $\N=(175)=(5)^2(\sqrt{-7})^2$ and nebentypus character
    $\left.\varepsilon\right|_K = \psi^{-2}$.
  \item The Fourier coefficients of~$f_K$ lie in~$\QQ(\zeta_8)$.  
  \item For each inner twist $(\sigma, \chi)$ of~$f$, the base change $f_K$ has inner
    twist by~$(\sigma,\chi|_K)$; so $f_K$ has four distinct inner twists
    (including the trivial inner twist).
  \end{enumalph}
\end{proposition}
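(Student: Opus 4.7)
The plan is to handle the three parts in sequence, relying on standard base change results together with the inner twist data from Proposition~\ref{prop:inner-twist}.

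For (a), the nebentypus identity is immediate from the definitions: since $K$ is cut out by $\chi_{-7}$, we have $\varepsilon|_K=(\chi_5\chi_{-7})|_K=\chi_5|_K$, and the definition of $\psi$ gives $\chi_5|_K=\psi^{-2}$. For the level, I would appeal to the conductor formulas for base change (Friedberg, Gerardin--Labesse). At the split prime $5$, where the local conductor of $\rho_f$ is $5^2$, restriction to $G_K$ preserves the local conductor and contributes $(5)^2$. At the ramified prime $\sqrt{-7}$, where the local conductor of $\rho_f$ at $7$ is $7^2$, the inner twist $(\sigma_{17},\chi_{-7})$ shows that locally at $7$ the representation $\rho_f$ is twist-equivalent via $\chi_{-7}$ to one of lower conductor, and restriction to $G_{K_{\sqrt{-7}}}$ trivializes this twist, dropping the local conductor to $(\sqrt{-7})^2$. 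Combined, this yields $\mathfrak{N}=(5)^2(\sqrt{-7})^2=(175)$. The main obstacle is this local computation at $\sqrt{-7}$, which can alternatively be verified directly by matching enough Hecke eigenvalues with the unique newspace at level $(175)$ and nebentypus $\psi^{-2}$ of the appropriate dimension.

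For (b), the key observation is that the inner twist $(\sigma_{17},\chi_{-7})$ has $\chi_{-7}|_K$ trivial. Using the explicit formulas for $a_\mathfrak{p}(f_K)$ recalled before the proposition, I would check that $\sigma_{17}$ fixes each $a_\mathfrak{p}(f_K)$: if $p$ splits in $K$ with $\mathfrak{p}\mid p$, then $a_\mathfrak{p}(f_K)=a_p(f)$ and $\sigma_{17}(a_p(f))=\chi_{-7}(p)a_p(f)=a_p(f)$; if $p$ is inert, then $a_\mathfrak{p}(f_K)=a_p(f)^2-2\varepsilon(p)p$, and $\sigma_{17}$ fixes both $a_p(f)^2$ (since $\chi_{-7}(p)^2=1$) and $\varepsilon(p)p\in\Q(i)$. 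Since $\Q(\zeta_{24})^{\langle\sigma_{17}\rangle}=\Q(\zeta_8)$, this yields $a_\mathfrak{p}(f_K)\in\Q(\zeta_8)$; the field is no smaller, which can be confirmed by exhibiting a single $\mathfrak{p}$ whose eigenvalue generates $\Q(\zeta_8)$.

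For (c), I would apply each inner twist $(\sigma,\chi)$ of $f$ to the split/inert formulas: the relation $\sigma(a_p(f))=\chi(p)a_p(f)$ translates to $\sigma(a_\mathfrak{p}(f_K))=\chi|_K(\mathfrak{p})a_\mathfrak{p}(f_K)$, so $(\sigma|_{\Q(\zeta_8)},\chi|_K)$ is an inner twist of $f_K$. Restriction from $\Ggroup=\langle\chi_5\rangle\times\langle\chi_{-7}\rangle$ to characters of $K$ kills $\chi_{-7}$, and the observation $17\equiv 1\pmod 8$ means $\sigma_a$ and $\sigma_{17a}$ agree on $\Q(\zeta_8)$; matching the pairing in Table~\ref{table:innertwists}, the eight inner twists of $f$ collapse to exactly four distinct inner twists of $f_K$, one for each of the pairwise distinct restricted characters $1,\chi_5|_K,\chi_5^2|_K,\chi_5^3|_K$.
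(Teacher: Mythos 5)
Your overall plan tracks the paper's approach—the paper likewise derives the level and nebentypus in (a), then derives (b) and (c) by restricting the inner twist identities $\sigma(f)=f\otimes\chi$ to~$K$. Parts (b) and (c) of your argument are essentially correct; working through the Doi--Naganuma formulas prime-by-prime is a more hands-on version of the paper's one-line global argument $\sigma_{17}(f_K)=f_K\otimes\chi_{-7}|_K=f_K$, and your collapse count of eight inner twists to four is right because $17\equiv1\pmod 8$ and $\chi_{-7}|_K$ is trivial.

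The problems are in your treatment of the level in part (a). First, a factual error: $5$ is \emph{inert} in $K=\QQ(\sqrt{-7})$, not split (indeed the paper points this out in Section~\ref{sec:characters}, and it is precisely why the order-$8$ character $\psi$ exists, the residue field at $(5)$ being $\FF_{25}$). Your phrase ``At the split prime $5$'' would, if taken at face value, lead to a different ideal-theoretic contribution than $(5)^2$ (two primes rather than one), so you land on the right answer for the wrong reason. Second, your local argument at $\sqrt{-7}$ is not what the inner twist actually gives: the relation $\sigma_{17}(f)=f\otimes\chi_{-7}$ shows that $f\otimes\chi_{-7}$ is a Galois \emph{conjugate} of~$f$ and hence has the \emph{same} conductor $1225$; it does not show $\rho_f$ is locally twist-equivalent to a representation of lower conductor. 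What one actually wants is the conductor-discriminant identity coming from $\mathrm{Ind}_K^\QQ\bigl(\rho_f|_{G_K}\bigr)\simeq\rho_f\oplus(\rho_f\otimes\chi_{-7})$, which gives $\mathrm{Nm}(\frakN)=N\cdot N'/\Disc(K)^2$ where $N'$ is the level of $f\otimes\chi_{-7}$; the inner twist supplies $N'=N=1225$, hence $\mathrm{Nm}(\frakN)=1225^2/7^2=30625$, and $\frakN=(5)^2(\sqrt{-7})^2$ is then pinned down by Galois stability. This is exactly the formula the paper cites from \cite[Lemma 5.2]{Turcas}. Your fallback of computing enough Hecke eigenvalues to recognize the form in the newspace of level $(175)$ with nebentypus $\psi^{-2}$ is a legitimate alternative, and the paper itself mentions a similar bound: the level divides $(5)^2(\sqrt{-7})^2$, and this is already the conductor of $\varepsilon|_K$.
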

\begin{proof}
The level of $f_K$ is determined uniquely by the properties that it is
stable under Galois conjugation and of norm $N^2/\Disc(K)^2=1225^2/7^2=30625$ \cite[Lemma 5.2]{Turcas}, where
we use the fact that the $-7$-twist of $f$ has the same level by
Proposition~\ref{prop:inner-twist}: $f$ has inner twist
by~$(\sigma_{17},\chi_{-7})$.  Alternatively, since $1225=5^2 7^2$ the level of $f_K$ divides $(5)^2(\sqrt{-7})^2$ but this is already equal to the conductor of the character $\varepsilon|_K=\psi^{-2}$ of $f_K$.  (This also follows from the recognition of $f_K \otimes \psi \in S_2(\Gamma_0(\frakN))$ in Proposition \ref{prop:rational-twist} below.)

Let $(\sigma, \chi)$ be any of the inner twists of~$f$, so
$\sigma(f)=f\otimes\chi$.  Restricting to~$K$ gives
$\sigma(f_K) = f_K\otimes\chi|_K$.
Taking the inner twist to be $(\sigma_{17}, \chi_{-7})$ shows that
$\sigma_{17}(f_K) = f_K$, so the coefficients of~$f_K$ lie in the
fixed field of $\sigma_{17}$, which is~$\QQ(\zeta_8)$.  
\end{proof}

After restriction to~$K$, the characters in $\Ggroup$ coincide in pairs
to give all the even powers of~$\psi$.  Hence we have
\begin{equation}
L(A, K,s) = \prod_{i=1}^{4}L(f_K\otimes\psi^{2i},s)^2,
\end{equation}
and hence from \eqref{eqn:qz8} that
\begin{equation} \label{eqn:CzeroKs}
L(\Czero,s) = \prod_{i=1}^{4}L(f_K\otimes\psi^{2i},s).
\end{equation}

\section{Twisting}\label{sec:twisting}

We now investigate twists of the base change form $f_K \in S_2(\N,\psi^{-2})$ from the previous section. Define 
\begin{equation}
F \colonequals f_K \otimes\psi \quad \text{and} \quad G \colonequals f_K \otimes\psi^5 = F \otimes \psi^{4}.  
\end{equation}

\begin{proposition}\label{prop:rational-twist}
The twists $F,G \in S_2(\frakN)$ have rational eigenvalues; the form
$F$ has LMFDB label \textup{\lmfdbbmf{2.0.7.1}{30625.1}{c}}, while $G$
has label \textup{\lmfdbbmf{2.0.7.1}{30625.1}{e}}.
\end{proposition}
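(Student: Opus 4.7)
The plan is to exploit the inner-twist structure of $f_K$ established in the preceding proposition, together with the Galois action on $\psi$, to show that $F$ and $G$ are invariant under $\Gal(\QQ(\zeta_8)\,|\,\QQ)$ (hence have rational Fourier coefficients), verify that they lie in $S_2(\frakN)$, and finally identify them with the two LMFDB entries by comparing eigenvalues.

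The first step is an algebraic identity. Writing $\sigma_a \in \Gal(\QQ(\zeta_8)\,|\,\QQ)$ for the automorphism $\zeta_8 \mapsto \zeta_8^a$ with $a \in (\ZZ/8\ZZ)^\times$, I would check by direct restriction from Table~\ref{table:innertwists} that the four inner twists of $f_K$ package into the single relation
\[
\sigma_a(f_K) \;=\; f_K \otimes \psi^{1-a}, \qquad a \in (\ZZ/8\ZZ)^\times.
\]
The consistency needed here is that each pair of elements of $\Gal(\QQ(\zeta_{24})\,|\,\QQ)$ sharing the same image in $\Gal(\QQ(\zeta_8)\,|\,\QQ)$ (e.g.\ $\sigma_{19}$ and $\sigma_{11}$) corresponds to twist characters agreeing on $K$ (both $\chi_5$ and $\chi_5\chi_{-7}$ restrict to $\psi^{-2}$ since $\chi_{-7}|_K = 1$), and that the four restricted characters are $\psi^{1-a}$ for $a \equiv 1,3,5,7 \pmod 8$ respectively. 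Because $\psi$ takes values in $\mu_8$, one also has $\sigma_a(\psi) = \psi^a$.

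It then follows that for any integer $e$,
\[
\sigma_a\bigl(f_K \otimes \psi^e\bigr) \;=\; f_K \otimes \psi^{1-a+ae} \;=\; f_K \otimes \psi^{1+(e-1)a},
\]
which equals $f_K \otimes \psi^e$ precisely when $(e-1)(a-1) \equiv 0 \pmod 8$. For $e=1$ this is automatic, and for $e=5$ it reduces to $4(a-1) \equiv 0 \pmod 8$, valid for every odd $a$. Hence both $F$ and $G$ are Galois-invariant, so their Hecke eigenvalues are rational. Moreover, the nebentypus of $f_K\otimes\psi^e$ is $\psi^{-2}\psi^{2e}$, which is trivial for $e=1,5$; and since $\psi$ has conductor $(5\sqrt{-7})$ whose square equals $\frakN = (175)$, a standard conductor-of-twist calculation places $F$ and $G$ in $S_2(\frakN)$.

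The final step is to pin down the LMFDB labels: using the base-change formula $a_\frakp(f_K) = a_p(f)$ at split primes $\frakp \mid p$, I would compute $a_\frakp(F) = \psi(\frakp)\,a_p(f)$ and $a_\frakp(G) = \psi(\frakp)^5\,a_p(f)$ at a few small split primes and compare with the eigenvalue data for \lmfdbbmfshort{2.0.7.1}{30625.1}{c} and \lmfdbbmfshort{2.0.7.1}{30625.1}{e}. This is the main obstacle, but only in a mild sense: the character $\psi$ is determined only up to the involution $\psi \leftrightarrow \psi^5$, and this choice is exactly what distinguishes $F$ from $G$, so the assignment of the labels \textsf{c} and \textsf{e} is fixed by the normalization convention for $\psi$ rather than by any further theoretical input. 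Once enough eigenvalues are tabulated to separate $F$ and $G$ from the other rational Bianchi newforms of level $\frakN$, the identification is complete.
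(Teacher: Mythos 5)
Your argument is essentially the paper's own proof, slightly repackaged: the paper also derives the inner-twist relation $\sigma_a(f_K)=f_K\otimes\psi^{1-a}$ (phrased as $\psi=\chi|_K\,\sigma(\psi)$, using $(\chi_5^{(a-1)/2})|_K=(\psi^{-2})^{(a-1)/2}=\psi/\sigma_a(\psi)$), deduces that $F=f_K\otimes\psi$ is Galois-invariant, treats $G=F\otimes\psi^4$ as a quadratic twist (equivalent to your $e=5$ case of $(a-1)(e-1)\equiv 0 \pmod 8$), and then identifies $F$ and $G$ among the eight rational newforms in the 60-dimensional new subspace at level $\frakN$ by eigenvalue comparison. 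Your observation that $\psi\leftrightarrow\psi^5$ merely swaps the labels \textsf{c} and \textsf{e} is a nice supplement, and the paper notes the same thing implicitly by identifying $G$ as the $\sqrt{5}$-twist of $F$ via $\psi^4=\chi_5^2|_K$.
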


\begin{proof}
  Note that twisting leaves the level unchanged and the twisted forms have trivial nebentypus.
  
  It is enough to show that $F$ is fixed by
  all~$\sigma\in\Gal(\QQ(\zeta_8)\,|\,\QQ)$, since twisting by the
  quadratic character~$\psi^4$ preserves rationality.  Let
  $(\sigma,\chi)$ be an inner twist of $f$.  Then $\sigma(f_K)=f_K \otimes \chi|_K$ and
  \begin{equation}
  \sigma(f_K\otimes\psi) = \sigma(f_K) \otimes \sigma(\psi) = f_K\otimes \chi|_K\sigma(\psi).
\end{equation}
Since there is an inner twist for every~$\sigma\in\Gal(\QQ(\zeta_8)\,|\,\QQ)$,
the result is equivalent to establishing the character relations
\begin{equation} \label{eqn:psichi}
\psi = \chi|_K \sigma(\psi)
\end{equation}
for all such $\chi$.  Inspection of Table~\ref{table:innertwists} shows that for $a\in(\ZZ/8\ZZ)^\times$,
\begin{equation} 
(\chi_5^{(a-1)/2})|_K= (\psi^{-2})^{(a-1)/2} = \psi/\psi^a = \psi/\sigma_a(\psi)
\end{equation} 
verifying \eqref{eqn:psichi}.

Having identified a base change, we now examine the space of
Bianchi newforms of level~$\N$, weight~$2$ and trivial nebentypus
character, a space with LMFDB label \lmfdbbmfspace{2.0.7.1}{30625.1}.  The
new subspace has dimension~$60$ and contains~$8$ newforms with
rational Fourier coefficients.  Of these one matches~$F$, namely
\lmfdbbmf{2.0.7.1}{30625.1}{c}, while $G$ matches
\lmfdbbmf{2.0.7.1}{30625.1}{e}, and is the $\sqrt{5}$-twist of~$F$
(since $\psi^4=\left.\chi_5^2\right|_K$, the quadratic character
associated to $K(\sqrt{5}) \supseteq K$).
\end{proof}

\begin{remark} \label{rmk:loeffler}
It is straightforward to identify whether a Bianchi newform $F$ is a twist of base change since if $F$ and $\tau(F)$ are twist-equivalent then either $F$ is a twist of base change or is induced from a Hecke character \cite{Lapid-Rogawski}. Furthermore, for two Bianchi newforms $F_1$ and $F_2$ with trivial nebentypus, if $a_\frakp(F_1)^2 = a_\frakp (F_2)^2$ for all but finitely many primes $\frakp$ then $F_1$ and $F_2$ are twist-equivalent \cite{Ramakrishnan}.
\end{remark}

\begin{remark}
In fact, all eight rational Bianchi newforms in the space $S_2(\N)$
(with LMFDB label \lmfdbbmfspace{2.0.7.1}{30625.1}) give interesting
examples: we briefly describe the other six below (Section
\ref{sec:others175}).  The LMFDB does not contain details of Bianchi
newforms with nontrivial nebentypus character, such as the form $f_K$
itself.
\end{remark}

That the twisted forms $F$ and $G$ have rational eigenvalues means
that one can reasonably expect the corresponding twist of the fourfold
$C_0$ to split as a product of QM-surfaces.  Recall (see after
\eqref{eqn:qz8}) that $\End(C_0) \simeq \Q(\zeta_8)$; we choose an
automorphism $\xi \in \Aut(C_0)$ of order $8$.  With this choice, we
may twist $C_0$ via $\psi$ (relative to $\xi$), to obtain a new
fourfold defined over~$K$,  denoted~$C$:
\begin{equation}
C \colonequals C_0^{(\psi)}.
\end{equation}

\begin{theorem}\label{T:4fold}
There is an isogeny $\Cprime \sim S_F \times S_G$ defined over $K$, where $S_F,S_G$ are abelian surfaces defined over $K$ with QM by the rational quaternion algebra $B$ of discriminant $15$, satisfying $L(S_F,s) = L(F,s)^2$ and $L(S_G,s) = L(G,s)^2$.
\label{thm:decomposition}
  \end{theorem}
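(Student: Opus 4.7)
The plan is to establish the isogeny $C \sim S_F \times S_G$ over $K$ by computing $\End(C)_\QQ$ and matching the resulting Galois representations against the Bianchi newforms $F$ and $G$.

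The key input is the $\Qbar$-endomorphism algebra. Since $C$ and $C_0$ are isomorphic over $\Qbar$, we have $\End(C_\Qbar)_\QQ \simeq \M_2(B)$, where $B$ is the rational quaternion algebra of discriminant $15$ (this follows from Proposition \ref{prop:ribetpyle}(c) applied to $C_0$ rather than the eightfold $A$). The $K$-rational endomorphisms of $C$ are precisely those $\phi \in \M_2(B)$ satisfying the twisted-invariance condition $\phi^\tau = \xi^{e(\tau)} \phi \xi^{-e(\tau)}$ for all $\tau \in \Gal_K$, where $\psi(\tau) = \zeta_8^{e(\tau)}$. The central subfield $\QQ(\zeta_8) = \End(C_0)_\QQ$ commutes with $\xi$ and therefore remains $K$-rational on $C$; however, the twist produces new $K$-rational endomorphisms from elements of $\M_2(B)$ lying outside $\QQ(\zeta_8)$ whose ordinary Galois obstruction is cancelled by conjugation by $\xi^{e(\tau)}$. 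Using the explicit $\Gal(K'/K)$-action on $\M_2(B)$ coming from the Ribet--Pyle analysis in the proof of Proposition \ref{prop:ribetpyle}, a direct computation should yield $\End(C)_\QQ \simeq B \times B$.

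Given $\End(C)_\QQ \simeq B \times B$, its two primitive central idempotents provide an isogeny decomposition $C \sim S_F \times S_G$ over $K$, where each factor is an abelian surface whose endomorphism algebra over $K$ is $B$, i.e., has QM by the quaternion algebra of discriminant $15$. To identify which factor corresponds to $F$ and which to $G$, I would decompose $V_\ell(C) \otimes_{\QQ_\ell} \overline{\QQ_\ell}$ into the four isotypic pieces $W_{\sigma_a}$ indexed by $\sigma_a \in \Gal(\QQ(\zeta_8)/\QQ)$, apply the inner-twist relations $\sigma_a(f_K) = f_K \otimes \psi^{1-a}$ from Section \ref{sec:overQ}, and track the Frobenius traces under the twist by $\xi$. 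The resulting pairing of the four isotypic pieces into two copies of the Galois representation of $F$ and two copies of that of $G$ establishes the L-function identities $L(S_F, s) = L(F, s)^2$ and $L(S_G, s) = L(G, s)^2$.

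The hard part is pinning down $\End(C)_\QQ \simeq B \times B$: one must rule out both the smaller possibility $\QQ(\zeta_8)$ (which would leave $C$ simple over $K$) and the larger $\M_2(B)$ (which would force $C \sim S^2$ for a single QM surface $S$). This hinges on a Brauer-class computation confirming that the quaternion algebra descends unobstructedly on each summand, together with a verification that the twist genuinely separates the two factors rather than producing a diagonal embedding of $B$ inside $\M_2(B)$.
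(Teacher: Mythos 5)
Your proposal takes a genuinely different route from the paper, but it has a real gap at the critical step.

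\textbf{Comparison of approaches.} The paper does not attempt to compute $\End(C)_\Q$ directly. Instead, it (i)~establishes the $L$-series identity $L(C,K,s) = L(F,s)^2 L(G,s)^2$ by decomposing $V_\ell C_0$ into eigenspaces for $\xi$ and tracking how the twist by $\psi$ permutes them; (ii)~invokes Kida's theorem on restriction of scalars and Faltings's isogeny theorem to get $\prod_{i=0}^7 C_0^{(\psi^i)} \sim_K C_0^4 \times C^4$; (iii)~constructs an explicit order-$8$ element $\left(\begin{smallmatrix} 0 & -1 \\ 1 & \alpha \end{smallmatrix}\right) \in \M_2(\calO)$, using the fact that $\Q(\sqrt{2})$ splits $B$ to find $\alpha \in \calO$ with $\alpha^2=2$, so that Kida can be applied to $S^2$ as well; and (iv)~compares the two applications of Kida and uses simplicity of $C_0$ to extract $S$ (and hence its quadratic twist) as isogeny factors of $C$. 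Your proposal replaces (ii)--(iv) by a direct computation of $\End(C)_\Q$ via the twisted-invariance condition $\phi^\tau = \xi^{e(\tau)}\phi\,\xi^{-e(\tau)}$ on $\End(C_{\Qbar})_\Q \simeq \M_2(B)$, then reads off the decomposition from the resulting central idempotents. That is a legitimate alternative strategy and, if carried out, would be somewhat more conceptual.

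\textbf{The gap.} You assert that the computation ``should yield'' $\End(C)_\Q \simeq B \times B$ and then, to your credit, flag this exact claim as ``the hard part.'' But this is not a small detail: it is the content of the theorem. Showing that the Galois-descent equations cut out $B\times B$ rather than $\Q(\zeta_8)$, a CM field, or $\M_2(B)$ requires understanding precisely how the $\Gal(K'/K)$-action on $\M_2(B)$ (coming from the Ribet--Pyle cocycle) interacts with conjugation by $\xi^{e(\tau)}$, and in particular requires choosing $\xi$ so that the relevant Brauer obstruction vanishes. This is exactly where the paper's argument does real work: the splitting $\Q(\sqrt{2}) \hookrightarrow B$ is what produces a usable order-$8$ element in $\M_2(\calO)$, without which the twist of $S^2$ by $\psi$ is not even defined relative to an automorphism, and without which the claimed decomposition into two $B$'s rather than a diagonal $B$ in $\M_2(B)$ is not justified. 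You identify the obstacle correctly, but your proposal does not supply the ingredient that overcomes it. As written, the proposal assumes the conclusion at the step where the proof has to be done.

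One smaller point: your identification of $F$ and $G$ with the two idempotent factors via Frobenius traces is essentially the same Tate-module bookkeeping as the paper's derivation of $L(C,K,s)=L(F,s)^2L(G,s)^2$; that part of your plan is sound, and in fact the paper needs this $L$-series identity anyway in order to conclude $S \not\sim S'$ at the end.
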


\begin{proof}
Let $\ell \equiv 1 \pmod{8}$ be prime and let $V_\ell C_0$ be the $\ell$-adic Tate representation of $C_0$, a free $\Q_\ell[\xi]$-module of rank $2$ where $\xi^4=-1$; choosing a primitive eighth root of unity $\zeta_8 \in \Q_\ell$, we may write 
\begin{equation} \label{eqn:VellC0}
V_\ell C_0 \simeq \bigoplus_{k \in (\ZZ/8\ZZ)^\times} W_{\ell,k}
\end{equation}
where each $W_{\ell,k}$ is a $\Q_\ell$-vector space of dimension $2$ with $\xi$ acting by the scalar $\zeta_8^{k}$.  

Now $V_\ell C_0$ is also a representation of $\Gal_K$; comparing with the $L$-series decomposition \eqref{eqn:CzeroKs}, as Galois representations we have (up to the choice of $\zeta_8$)
\begin{equation} 
W_{\ell,k} \simeq W_{\ell,1} \otimes \psi^{k-1}.
\end{equation}
On the twist, we have the Galois representation
\begin{equation} 
V_\ell C = V_\ell C_0 \otimes_{\Q_\ell[\xi]} \Q_\ell[\xi](\psi) 
\end{equation}
where $\Q_\ell[\xi](\psi)$ is $\Q_\ell[\xi]$ where $\Gal_K$ acts via $\psi$: explicitly, on the eigenspace of $\Q_\ell[\xi]$ where $\xi$ acts by $\zeta_8^k$, now $\Gal_K$ acts via $\psi^k$.  In terms of the splitting \eqref{eqn:VellC0} above, we have
\begin{equation} 
V_\ell C \simeq \bigoplus_{k \in (\ZZ/8\ZZ)^\times} W_{\ell,k} \otimes \psi^k
\simeq \bigoplus_{k} W_{\ell,1} \otimes \psi^{2k-1} 
\end{equation}
and thus
\begin{equation} \label{eqn:akfgh}
\begin{aligned}
  L(\Cprime,K,s) &= L(f_K\otimes\psi,s)L(f_K\otimes\psi^5,s)L(f_K\otimes\psi^9,s)L(f_K\otimes\psi^{13},s)\\
  &= L(F,s)^2L(G,s)^2.
  \end{aligned}
\end{equation}

Let $L \supseteq K'=K(\zeta_5)$ be the cyclic degree 8 extension of $K$ cut out by $\psi$.  By work of Kida \cite[p.~53]{Kida} we have 
\begin{equation} \label{eqn:c07si0}
\Res_{L|K} C_{0,L} \sim_K \prod_{i=0}^7 C_0^{(\psi^i)};
\end{equation}
i.e., the restriction of scalars of the base change of $A$ to $L$ is isogenous over $K$ to the product of the twists of $A$ by powers of $\psi$.  From the $L$-series decomposition \eqref{eqn:CzeroKs} and the theorem of Faltings, we conclude that 
\begin{equation} \label{eqn:c07si}
\prod_{i=0}^7 C_0^{(\psi^i)} \sim_K C_0^4 \times C^4. 
\end{equation}

Let $S$ be the building block from Proposition \ref{prop:ribetpyle}(d); up to isogeny (over $K'$), we may choose $S$ so that $\End(S) = \calO$, where $\calO \subseteq B$ is a maximal order.  Then $A_L \simeq C_{0,L}^2 \simeq S_L^4$, so $C_{0,L} \sim_L S_L^2$.  Since $\Q(\sqrt{2})$ splits $B$ and all maximal orders in $\calO$ are conjugate, there exists $\alpha \in \calO$ such that $\alpha^2=2$; then $\begin{pmatrix} 0 & -1 \\ 1 & \alpha \end{pmatrix} \in \M_2(\calO)$ is an element of order $8$, so we may consider twists of $S^2$ by powers of $\psi$.  Applying the theorem of Kida now to $S^2$, with \eqref{eqn:c07si0}--\eqref{eqn:c07si} we conclude there are isogenies over $K$
\begin{equation}
\prod_{i=0}^7 (S^2)^{(\psi^i)} \sim \Res_{L|K} S_L^2 \sim  \Res_{L|K} C_{0,L} \sim \prod_{i=0}^7 C_0^{(\psi^i)} \sim C_0^4 \times C^4.
\end{equation}
Since $C_0$ is simple, we conclude that $S$ is an isogeny factor of
$C$ over $K$, and $C \sim S \times S'$ for another abelian surface
$S'$.  We have $(S^2)^{(\psi^4)}=(S^{(\psi^4)})^2$ since
$S^{(\psi^4)}$ is the quadratic twist by $\psi^4$ (corresponding to the quadratic extension $K(\sqrt{5}) \supseteq K$),
so $S^{(\psi^4)}$ is also an isogeny factor of $C$ over $K$.  Since $S$ is a QM abelian surface,
its $L$-series is a square, so by \eqref{eqn:akfgh} we have $S
\not\sim S'$, and so $C \sim S \times S^{(\psi^4)}$, and thus up to
labelling $L(S,s)=L(F,s)^2$ and the result follows.
\end{proof}

\begin{remark}
The argument in Theorem \ref{T:4fold} can evidently be generalized to more general situations.
\end{remark}

\section{An explicit model for the building block}\label{sec:genus2curve}

In light of Theorem \ref{T:4fold}, we now seek explicit models for the
abelian surfaces $S_F$ and $S_G$ whose $L$-functions match $F$ and
$G$.  Because $G$ is a quadratic twist of $F$, it suffices to exhibit
a model for $S_F$.  We refer to Buzzard \cite{Buzzard-Shim} and Voight
\cite[Chapter 43]{Voight-book} as general references on abelian
surfaces with quaternionic multiplication.

Let $B \colonequals \QA{3}{5}{\Q}$ be the quaternion algebra
over $\Q$ of discriminant $15$, and let $\calO \subseteq B$ be a maximal order.
Then there exists $\mu \in \calO$ such that $\mu^2+15=0$, a 
\defi{principal polarization} on $\calO$.
The moduli space for principally polarized abelian surfaces 
with QM structure by $(\calO,\mu)$ is a Shimura curve $X_0(15,1)$ defined over $\Q$ by
work of Shimura \cite{Shimura} 
and Deligne \cite{Deligne}.  

An explicit model for $X_0(15,1)$ was computed by Jordan
\cite[Proposition 3.2.1]{Jordan} (see also Jordan--Livn\'e \cite[Table
1]{JordanLivne}) and Elkies \cite[(76)]{Elkies}): it is described by the
equation
\begin{equation} 
  X_0(15,1) \colon (x^2+3)(x^2+243)+3y^2 = 0. 
\end{equation}
In particular, $X_0(15,1)$ is a genus $1$ curve with $X_0(15,1)(\Q)=X_0(15,1)(\RR)=\emptyset$.  The Jacobian of $X_0(15,1)$ is the elliptic curve over $\Q$ with
LMFDB label~$\lmfdbec{15}{a}{5}$.

The quotient by the Atkin--Lehner involution $w_{15}$ gives the following map.
\begin{equation}
\begin{aligned}
X_0(15,1) &\to X_0(15,1)/w_{15} \simeq \PP^1 \\
j(x,y) &= x^2/81
\end{aligned}
\end{equation}  
The Igusa--Clebsch invariants of weights $2,4,6,10$ for a coarse space for the universal abelian surface over $X_0(15,1)/w_{15}$ are given by $(I_2:I_4:I_6:I_{10})$ where
\begin{equation} \label{I2610}
\begin{aligned}
I_2 &= 12(j^4+15j^3+105j^2+125j+10), \\
I_4 &= 45(4j+1)(j+3)^2(j-1)^4, \\
I_6 &= 9(84j^5+1414j^4+8865j^3+11157j^2+3895j+185)(j+3)^2(j-1)^4, \\
I_{10} &= 2(j+3)^6(j-1)^{12}.
\end{aligned}
\end{equation}
The invariants \eqref{I2610} were computed by Elkies (2007) using K3
surfaces and by Guo--Yang \cite[Table 4]{GuoYang} using Borcherds
products.

With this data in hand, we look for points on $X_0(15,1)$ over $K$.  The base change of its Jacobian to $K$ has rank $1$, so we will find infinitely many points.  Searching in a small box, we find the points $(x,y)=(\pm 9/\sqrt{-7},\pm 180/7)$ for which $j(x,y)=-1/7$ and normalized invariants $(648:23625:17474625:-392000000)$.  From these invariants, we construct a genus $2$ curve $X'$ over $K=\Q(\sqrt{-7})$ well-defined up to twist---we confirm that the Euler factors match \emph{up to sign} in the sense that $L_\frakp(X',T)=L_\frakp(f,\pm T)^2$ for good primes $\frakp$, and this gives strong evidence that we have found a match.  

Next, we identify the quadratic field $K_\delta \supseteq K$ which is associated to the twist.  For a good prime $\frakp$, write $L_\frakp(X',T)=(1-b_\frakp T + \mathrm{Nm}(\frakp)T^2)$ so that $b_\frakp = \mu(\frakp) a_\frakp(F)$ and $\mu(\frakp)=\pm 1$.  Then $\mu(\frakp)=-1$ if and only if $\frakp$ is inert in $K_\delta$; searching for fields with this property supported at primes dividing the discriminant of $X'$ gives a twist $X$ for which the Euler factors match for all $\frakp$ with $\mathrm{Nm}(\frakp) \leq 200$.  

In this way, we obtain a candidate model $X$ for a curve whose Jacobian putatively corresponds to $F$; however, this model has \emph{enormous} coefficients.  So we seek to reduce these coefficients.  We first apply the reduction algorithms of Bouyer--Streng \cite{Bouyer-Streng} to reduce the discriminant.  We would like to apply the Cremona--Stoll algorithm \cite{cremona-stoll} to reduce the coefficients of $X$ further, but the current implementation requires a real place in the base field.  Nevertheless, we can apply some reductions by hand, and then iterate.  In this way, we find the following model for $X=X_F \colon y^2 = f_F(x)$ where 
\begin{equation} \label{eqn:woahnelly}
\begin{aligned}
f_F(x) \colonequals &\ (-12774794511065444310316373455693222445855\sqrt{-7}\\ &\hspace{2cm}- 7695069988816582237324746836913139396789)x^6\\ & \quad+ 
    (36903356111088112749998450776069205963943\sqrt{-7} \\ &\hspace{2cm}+ 74766360407432704741728853365227303755813)x^5\\ & \quad+
    (-32191715349438139105751177726148896397205\sqrt{-7}\\ &\hspace{2cm}- 160594143536856131940416520163791514260770)x^4\\ & \quad+ 
    (3138757306334269455206726101294212247520\sqrt{-7}\\ &\hspace{2cm}+ 143070480545726568206381517716426697366880)x^3\\ & \quad+ 
    (8723717170068020100441878642537682595835\sqrt{-7}\\ &\hspace{2cm}- 59213195766173130554503672005222304866040)x^2\\ & \quad+ 
    (-4080652149555552661387182770620477619703\sqrt{-7}\\ &\hspace{2cm}+ 10457773570874119512625130761813845233833)x\\ & \quad+ 
    527840106295482417952525742290110846145\sqrt{-7}\\ &\hspace{2cm}- 493713158051445534935948551740658676743.
\end{aligned}
\end{equation}
By construction, we know that $J_F \colonequals \Jac(X_F)$ has $\End(J_F) \supseteq \calO$.  Looking at Euler factors we see that $J_F$ does not have CM, so $\End(J_F)=\calO$, since $\calO$ is a maximal order.  

\begin{proposition} \label{prop:FaltSer}
For all good primes $\frakp$ we have $L_\frakp(X_F,T)=L_\frakp(J_F,T)=L_\frakp(F,T)^2$,
\end{proposition}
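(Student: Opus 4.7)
The plan is to prove Proposition~\ref{prop:FaltSer} by reducing the equality of $L$-factors to an isogeny between abelian surfaces and then certifying that isogeny by a Faltings--Serre--Livn\'e argument. The equality $L_\frakp(X_F,T)=L_\frakp(J_F,T)$ is the standard identification of the $L$-factor of a smooth projective curve with that of its Jacobian, so the content of the statement is
\[
L_\frakp(J_F,T)=L_\frakp(F,T)^2
\]
at every prime $\frakp$ of good reduction. By Theorem~\ref{T:4fold} we already have an abelian surface $S_F$ over~$K$ with $\End(S_F)\simeq\calO$ and $L(S_F,s)=L(F,s)^2$, so by Faltings's isogeny theorem it suffices to establish an isogeny $J_F\sim S_F$ defined over~$K$.

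Both $J_F$ and $S_F$ are QM abelian surfaces whose endomorphism ring is the fixed maximal order $\calO\subseteq B$ of reduced discriminant~$15$. For any prime $\ell\notin\{3,5\}$ the algebra $\calO\otimes\ZZ_\ell$ splits, and the rational $\ell$-adic Tate module of each surface decomposes as a direct sum of two copies of a $2$-dimensional $G_K$-representation~$\rho_{J,\ell}$, respectively $\rho_{S,\ell}$. Both representations are unramified outside $\Sigma\colonequals\{(2),(5),(\sqrt{-7})\}\cup\{\frakp\mid\ell\}$, have determinant equal to the $\ell$-adic cyclotomic character, and fit into compatible systems. To prove $\rho_{J,\ell}\simeq\rho_{S,\ell}$ (and hence $J_F\sim S_F$) I would apply the Faltings--Serre--Livn\'e method in the form used by Dieulefait--Guerberoff--Pacetti~\cite{DGPmodularity}: work at $\ell=2$, identify the common residual representation $\overline{\rho}$, produce via Livn\'e's criterion a finite effective set $T$ of primes of $K$ on which agreement of Frobenius traces is sufficient, and then verify that equality on~$T$.

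The concrete computational ingredients I would need are: (i) the mod-$2$ representation of $G_K$ on $J_F[2]$, obtained from~\eqref{eqn:woahnelly} by analyzing the Galois action on the $2$-torsion, together with the comparison residual representation on $S_F[2]$ extracted from the building-block description in Proposition~\ref{prop:ribetpyle}(d); (ii) the rational Hecke eigenvalues $a_\frakp(F)$, available from the LMFDB entry \lmfdbbmfshort{2.0.7.1}{30625.1}{c}; and (iii) Weil-polynomial data $L_\frakp(X_F,T)$ obtained by point-counting on the reduction of $X_F$ modulo~$\frakp$ for each $\frakp\in T$, producing $L_\frakp(J_F,T)$ from the zeta function. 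The principal obstacle I anticipate is step~(i): because the coefficients in~\eqref{eqn:woahnelly} are enormous, computing $J_F[2]$ and certifying that $\overline{\rho}_{J,2}\simeq\overline{\rho}_{S,2}$ with sufficient image to satisfy Livn\'e's hypotheses is delicate. If the residual picture at $\ell=2$ turns out to be too degenerate (e.g.\ reducible), the fallback is to run the same argument at $\ell=3$ using a modularity lifting theorem in the style of Sanna's thesis~\cite{SannaThesis}, or, alternatively, to attempt a direct period-theoretic construction of the isogeny $J_F\to S_F$ that sidesteps the Galois-representation comparison entirely.
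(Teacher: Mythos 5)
Your overall strategy---a Faltings--Serre--Livn\'e comparison at $\ell=2$ in the style of Dieulefait--Guerberoff--Pacetti---is indeed what the paper does, but the detour through $S_F$ creates a genuine gap. You propose to certify the isogeny $J_F\sim S_F$ by comparing $\overline{\rho}_{J_F,2}$ with $\overline{\rho}_{S_F,2}$, where the latter is to be ``extracted from the building-block description in Proposition~\ref{prop:ribetpyle}(d).'' But $S_F$ is produced there only up to isogeny, as an abstract twist of a building block of $A_f$; there is no model from which to read off $S_F[2]$, and the $2$-torsion is not isogeny-invariant. This step, as stated, cannot be carried out. The paper sidesteps $S_F$ entirely: it compares $\rho_{J_F,2}$ directly with $\rho_{F,2}$, the Galois representation attached to the Bianchi newform $F$, whose Frobenius traces are the Hecke eigenvalues $a_\frakp(F)$---exactly the quantity you already planned to pull from the LMFDB in your step (ii). The residual representation of $F$ is determined by those eigenvalues mod~$2$, so no abelian-surface model is needed on that side at all.

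Two further ingredients from the paper's argument that your sketch omits. First, $\rho_{F,2}$ is \emph{a priori} valued in $\GL_2(\Q_2^{\textup{al}})$; one must first show the coefficient field can be taken to be $\Q_2$, which the paper does via Taylor's criterion \cite[Corollary 1]{MR1253207} by exhibiting two Frobenius polynomials with distinct, nonadditively-opposite roots. Second, your worry that the residual picture at $\ell=2$ might be degenerate is unfounded here: the paper verifies that $\overline{\rho}_{J_F,2}$ is surjective onto $\GL_2(\FF_2)\simeq S_3$ by computing the $2$-division polynomial of $J_F$, and then identifies the $2$-division field among the five candidate sextic fields unramified away from $\{2,5,7\}$ in the Jones--Roberts database to match it with the residual representation of $F$. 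Once the residuals agree with absolutely irreducible image, the lifting-obstruction computation of \cite[Section 2.1]{DGPmodularity} reduces the problem to matching traces at a small explicit set of primes, which is then checked against the (already computed) Euler factors of $X_F$. Your fallback at $\ell=3$ is not needed, and would in any case be awkward since $3$ divides the QM discriminant $15$, so $V_3 J_F$ does not split as a sum of two $2$-dimensional pieces as cleanly as at $\ell=2$.
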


\begin{proof}
  The proof of this proposition is a standard application of the method of Faltings--Serre, as explained by Dieulefait--Guerberoff--Pacetti \cite{DGPmodularity}; we only sketch some pertinent details here.
  Attached to $F = f_K \otimes \psi$ is a Galois representation $\rho_{F,2}\colon \Gal_K\to \GL_2(\Q_2^{\textup{al}})$, 
The characteristic polynomial of Frobenius at primes above $2$ and $11$ are given by $1\pm T + 2T^2$ and $1-4T + 11T^2$, whose roots are different and do not add zero, hence the coefficient field of $\rho_{F,2}$ may be taken as $\QQ_2$ by Taylor \cite[Corollary 1]{MR1253207}.  Since $J_F$ has QM (defined over $K$) by $B$ of discriminant $15$ and $2 \nmid 15$, the $2$-adic Tate representation $V_2 J_F$ is a free of rank $1$ over $B \otimes \Q_2 \simeq \M_2(\Q_2)$ and so affords a Galois representation $\rho_{J_F,2} \colon \Gal_K \to \GL_2(\Q_2)$.  

The $2$-division field $K(J_F[2])$ is defined by the polynomial 
\[ y^{12} - 6y^{11} + 21y^{10} - 50y^9 + 69y^8 - 42y^7 - 69y^6 + 210y^5 - 190y^4 + 48y^3 + 168y^2 - 160y + 64 \]
(as an absolute extension); we confirm this extension is an $S_3$-extension of $K$, so the mod $2$ representation $\overline{\rho}_{J_F,2}\colon \Gal_K \to \GL_2(\FF_2) \simeq S_3$ is surjective.    

Looking at Euler factors, the mod $2$ representation
$\overline{\rho}_{F,2}\colon \Gal_K \to \GL_2(\FF_2) \simeq S_3$
attached to $F$ is also surjective.  Since the conjugate of $F$ under
$\Gal(K\,|\,\Q)$ is a twist of $F$, the extension $K(J_F[2])$ is
Galois over $\Q$ with Galois group $S_3 \rtimes C_2 \simeq D_6$, and
so arises as the Galois closure of a sextic extension of $\Q$
unramified away from $2,5,7$.  From the Jones--Roberts database
\cite{JonesRoberts}, there are exactly $5$ such extensions whose
Galois closure contains $\Q(\sqrt{-7})$:
\begin{align*}
&    x^6 - 3x^5 + 2x^4 - 6x^3 + 25x^2 - 19x + 8, \\
&    x^6 - x^5 - x^4 - x^3 + 2x + 2, \\
&    x^6 - x^5 + 9x^4 - 11x^3 + 16x^2 + 14x + 4, \\
&    x^6 - 2x^5 + 11x^4 - 8x^3 + 30x^2 + 24x + 8, \\ 
&    x^6 - x^5 + x^4 - 29x^3 + 8x^2 - 64x + 512
\end{align*}
Looking at Euler factors, we rule out all but the first one, and then confirm that its Galois closure is isomorphic to the one for $J_F$; this shows that the two mod $2$ representations $\overline{\rho}_{F,2}$ and $\overline{\rho}_{J_F,2}$ are equivalent.  

Since both residual representations are isomorphic with absolutely
irreducible image, following Dieulefait--Guerberoff--Pacetti
\cite[Section 2.1]{DGPmodularity} we then show that fully the $2$-adic
representations $\rho_{F,2}$ and $\rho_{J_F,2}$ are equivalent by
showing there is no obstruction to lifting.  Computing a set of
obstructing primes (whose Frobenius classes generate the Galois group
of the maximal exponent $2$ extension of $K(J_F[2])$ unramified away
from $2,5,7$), we conclude it is enough to check that $\mytr
\rho_{F,2}(\Frob_\frakp) = \mytr \rho_{J_F,2}(\Frob_\frakp)$ for a
primes~$\frakp$ of $K$ dividing $3, 11, 13, 17, 23, 29$.  Having
already checked this for all good primes $\frakp$ up to norm
$\Nm(\frakp) \leq 200$, we conclude the proof.
\end{proof}

\section{Other rational Bianchi newforms at level (175)} \label{sec:others175}

As well as the newforms $F$ and~$G$ with labels
\lmfdbbmf{2.0.7.1}{30625.1}{c} and \lmfdbbmf{2.0.7.1}{30625.1}{e},
there are six other rational newforms of the same level, which display
a variety of phenomena.  All the quadratic twists mentioned here are
by $\sqrt{5}$ unless otherwise specified.
\begin{itemize}
  \item Newform \lmfdbbmfshort{2.0.7.1}{30625.1}{a} is the base-change of
    classical modular forms in $S_2(1225)$ with LMFDB labels
    \lmfdbcmf{1225}{2}{a}{b} and \lmfdbcmf{1225}{2}{a}{d}.  These are
    $-7$-twists of each other, and associated to the elliptic curves
    in the isogeny classes \lmfdbecisog{1225}{b} and
    \lmfdbecisog{1225}{d}.  The base-change to~$K$ of the curves in
    both these isogeny classes all lie in the isogeny class
    \lmfdbecnfisog{2.0.7.1}{30625.1}{a}.
  \item Newform \lmfdbbmfshort{2.0.7.1}{30625.1}{b} is a base-change, but
    of a newform with coefficients in $\QQ(\sqrt{2})$.  The associated
    modular abelian variety is a surface which splits over~$K$ into
    the product of elliptic curves, with associated elliptic curves 
    in the isogeny class
    \lmfdbecnfisog{2.0.7.1}{30625.1}{b}, which are $\QQ$-curves but not
    base-change.
  \item Newform \lmfdbbmfshort{2.0.7.1}{30625.1}{d} is also base-change,
    but of a newform with coefficients in $\QQ(\sqrt{5})$ whose
    associated modular abelian variety is a surface which does
    \emph{not} split over~$K$.  Hence there is no elliptic curve
    associated to the newform, while there is an abelian surface with
    QM.  The newform has CM by $-35$ (so is its own quadratic twist)
    and is an example of the situation described in
    \cite[p.~411]{JCextratwist}.
  \item Newform \lmfdbbmfshort{2.0.7.1}{30625.1}{f} is the quadratic twist
    of \lmfdbbmfshort{2.0.7.1}{30625.1}{a} so has associated elliptic
    curves \lmfdbecnfisog{2.0.7.1}{30625.1}{f} which are base-change.
  \item Newform \lmfdbbmfshort{2.0.7.1}{30625.1}{g} is the quadratic twist
    of \lmfdbbmfshort{2.0.7.1}{30625.1}{b} so has associated elliptic
    curves \lmfdbecnfisog{2.0.7.1}{30625.1}{g} which are $\QQ$-curves but
    not base-change.
  \item Newform \lmfdbbmfshort{2.0.7.1}{30625.1}{h} is the quadratic twist
    of \lmfdbbmfshort{2.0.7.1}{1225.1}{a} at level $\N'=(35)$.  It has
    associated elliptic curves in isogeny class
    \lmfdbecnfisog{2.0.7.1}{30625.1}{h} which are base-changes of
    those in isogeny classes \lmfdbecisog{1225}{h} and
    \lmfdbecisog{1225}{j}, and is itself the base-change of classical
    newforms \lmfdbcmf{1225}{2}{a}{h} and \lmfdbcmf{1225}{2}{a}{j}.
\end{itemize}

\section{Relation with the Paramodular Conjecture}\label{sec:paramodular}

We conclude with an application to the Paramodular Conjecture.  Recall
the following conjecture due to Brumer--Kramer \cite{BrumerKramer} and
its corrigendum \cite{BrumerKramer2}, as amended by Calegari.

\begin{conjecture} \label{conj:BK0}
There is a bijection between the set of isogeny classes of QM abelian fourfolds $B$ over $\Q$ of conductor $N^2$ and the set of cuspidal, nonlift Siegel paramodular newforms $f$ of genus $2$, weight $2$, and level $N$ with rational Hecke eigenvalues, up to nonzero scaling.  Moreover, if $B \leftrightarrow f$ in this bijection, we have the equality
\begin{equation} \label{eqn:LBsLfs}
L(B,s)=L(f,s,\spin)^2. 
\end{equation}
\end{conjecture}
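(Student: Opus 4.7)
The plan is to establish both the bijection and the $L$-function equality through the global Langlands correspondence for $\GSp_4$, matching compatible systems of $\ell$-adic Galois representations on the two sides. Given a QM abelian fourfold $B/\Q$ with $\End(B)\otimes\Q \simeq D$ a quaternion algebra over $\Q$, the rational Tate module $V_\ell B$ is free of rank~$2$ over $D\otimes\Q_\ell$, and the $\Gal_\Q$-action commutes with the quaternion action; combined with the symplectic form arising from a polarization, this produces a compatible system $\rho_{B,\ell}\colon \Gal_\Q \to \GSp_4(\Q_\ell)$ with cyclotomic multiplier whose Euler factors are automatically squares, explaining the square in $L(B,s) = L(f,s,\spin)^2$. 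Conversely, to each cuspidal nonlift paramodular newform $f$ of weight~$2$, level~$N$ and rational Hecke eigenvalues, one expects a compatible system $\rho_{f,\ell}\colon \Gal_\Q \to \GSp_4(\Q_\ell)$ of conductor $N$ whose Frobenius characteristic polynomials recover the Euler factors of $L(f,s,\spin)$. The bijection and $L$-equality then follow from the theorem of Faltings together with matching of these two compatible systems.

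The forward direction (from $B$ to $f$) amounts to proving automorphy of $\rho_{B,\ell}$ for some (equivalently every) prime~$\ell$. The strategy is first to establish residual modularity of $\overline{\rho}_{B,\ell}$ at a small prime---either by recognising the residual image as induced from a solvable subgroup and appealing to cyclic base change, or via Khare--Wintenberger-style results adapted to the $\GSp_4$ setting---and then to invoke a modularity lifting theorem for $\GSp_4$ generalising work of Taylor, Kisin, Calegari--Geraghty, and Arthur to promote residual modularity to modularity in characteristic zero. The reverse direction (from $f$ to $B$) is considerably harder: one must realise $\rho_{f,\ell}$ geometrically as $H^1_{\textup{\'et}}$ of an abelian fourfold carrying quaternionic multiplication. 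In cohomological weight this would follow from Weissauer's construction via the Siegel threefold, but weight~$2$ is outside the accessible range; one would need to argue via $p$-adic interpolation of higher-weight eigenvarieties, level-raising congruences, or a direct construction in coherent cohomology. Once $\rho_{f,\ell}$ is realised geometrically, the rationality of Hecke eigenvalues together with the nonlift hypothesis should force the resulting motive to split as two isogenous copies, and the induced endomorphism algebra $D$ to be a quaternion algebra---yielding the QM fourfold~$B$.

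The principal obstacle is precisely the weight~$2$ case of the $\GSp_4$ Langlands correspondence: producing an abelian fourfold whose Tate module realises a given paramodular Galois representation is essentially the full content of the conjecture, and no general construction is known (this is parallel to, but strictly harder than, the classical Eichler--Shimura construction of elliptic curves from weight~$2$ newforms on $\GL_2/\Q$). A secondary obstacle is the precise interplay between the QM structure and Calegari's nonlift hypothesis: when $D \simeq \M_2(\Q)$ the abelian fourfold is isogenous to a square and corresponds to a Yoshida or Saito--Kurokawa-type lift, so one must verify that excluding lifts on the automorphic side is equivalent to requiring $D$ to be a division algebra on the motivic side. A final technical point, which the main results of this paper (specifically Proposition~\ref{prop:FaltSer}) illustrate for the analogous Bianchi setting, is that the matching of $\ell$-adic representations in practice proceeds by a Faltings--Serre--Livn\'e descent to finitely many Frobenius traces, once residual isomorphism and control of ramification are in place.
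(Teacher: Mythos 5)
This statement is not a theorem of the paper: it is the Paramodular Conjecture of Brumer--Kramer (as amended by Calegari), which the paper states without proof and only uses as motivation. What the paper actually does in Section~\ref{sec:paramodular} is verify one prediction of the conjecture in a single example: via the theta lift of Berger--Demb\'el\'e--Pacetti--\c{S}eng\"un it attaches a rational paramodular form $\Pi$ to the Bianchi newform $F$, and Theorem~\ref{thm:decomposition} supplies the QM fourfold $\Res_{K|\QQ}S_F$ with $L(\Res_{K|\QQ}S_F,s)=L(\Pi,s,\spin)^2$. Your proposal is therefore not something that can be matched against a proof in the paper, and more importantly it is not a proof at all: it is a survey of strategies, and by your own admission the central step---realising the conjectural compatible system $\rho_{f,\ell}$ attached to a weight~$2$ paramodular newform inside the Tate module of an abelian fourfold with quaternionic multiplication---is ``essentially the full content of the conjecture'' with ``no general construction known.'' Appeals to modularity lifting, eigenvariety interpolation, or Faltings--Serre descent do not close this gap; Faltings--Serre in particular only compares two compatible systems that are already known to exist geometrically and automorphically, which is exactly how Proposition~\ref{prop:FaltSer} is used here in one explicit instance.

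There is also a concrete mathematical error in your discussion of the nonlift hypothesis. You assert that when $\End(B)\otimes\Q\simeq \M_2(\Q)$ the fourfold ``corresponds to a Yoshida or Saito--Kurokawa-type lift.'' This is wrong: as the paper notes just after the conjecture, the split case is the generic one, with $B\sim A^2$ for an abelian surface $A$ over $\Q$ with $\End(A)=\ZZ$ and $L(A,s)=L(f,s,\spin)$ for a genuinely nonlift paramodular form $f$ of general type. The lifts excluded by the conjecture (Saito--Kurokawa, Yoshida) correspond to non-tempered forms or to abelian surfaces with larger endomorphism structure (e.g.\ isogenous to products of elliptic curves), not to the dichotomy between $D$ split and $D$ a division algebra; Calegari's amendment is precisely about adding the division-algebra QM fourfolds (the ``fake abelian surfaces'') alongside the surfaces with $\End=\ZZ$, not about removing the split case as a lift. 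So even as a roadmap, the claimed equivalence between ``nonlift'' and ``$D$ a division algebra'' would lead the argument astray.
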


As with Bianchi modular forms, the frequently arising case is where
$\End(B)_\Q \simeq \M_2(\Q)$, in which case $B \sim A^2$ for $A$ an
abelian surface over $\Q$ with $\End(A)=\ZZ$ with
$L(A,s)=L(f,s,\spin)$.  (When $B$ is a division algebra, the
nonexistent geometric object $A'$ such that $L(A',s)=L(f,s,\spin)$
could be thought of as a \emph{fake abelian surface}.)

To our Bianchi newform $F$, by theta lift one can attach a Siegel
paramodular form $\Pi$, with rational eigenvalues \cite[Theorem
  4.1]{Bergeretal} such that $L(F,s) = L(\Pi,s,\text{spin})$.  The
form $\Pi$ satisfies the hypothesis of the Paramodular Conjecture
(Conjecture \ref{conj:BK0}), so there should be a QM abelian fourfold
attached to $\Pi$.  Indeed, by Theorem \ref{thm:decomposition} the
Weil restriction of scalars $\Res_{K|\QQ} S_F$ of $S_F$ from $K$ to
$\Q$ gives the desired geometric object, and
\begin{equation} \label{eqn:lressf}
L(\Res_{K|\QQ} S_F,s)=L(S_F,K,s)=L(F,s)^2 = L(\Pi,s,\spin)^2. 
\end{equation}

\begin{remark}
In work of Boxer--Calegari--Gee--Pilloni \cite[Lemma
  10.3.2]{Boxeretal}, assuming standard conjectures a general argument
is given to explain the existence of a geometric object attached to an
automorphic form on $\GSp_4$ with rational Hecke eigenvalues.  In
concrete examples, the veracity of the standard conjectures is very
hard to check; however, when both the automorphic form and (fake)
abelian surface can be explicitly given, the method of Faltings--Serre
can often be applied successfully in practice
\cite{BrumerPacetti_et_al} to establish an equality of $L$-functions,
such as \eqref{eqn:lressf}.
\end{remark}

\bibliographystyle{alpha}

\providecommand{\NOOPSORT}[1]{}

\end{document}